\newcounter{algorithm}
\newenvironment{algorithm}{\refstepcounter{algorithm}\vspace{1ex}
{\sc Algorithm \thealgorithm.}\hspace{0.3em}\parindent=0pt}{\vspace{1ex}}
\newcounter{remark}
\newenvironment{remark}{\refstepcounter{remark}\vspace{1ex}
{\sc Remark \theremark.}\hspace{0.3em}\parindent=0pt}{\vspace{1ex}}
\begin{document}

\title{An Improved {dqds} algorithm}
\author{Shengguo Li\thanks{College of Science, and the State key laboratory for high performance
computation, National University of Defense
Technology, Changsha 410073, China (nudtlsg\allowbreak @gmail.com).
The research of Li was supported by CSC (2010611043)
and in part by
National Natural Science Foundation of China (No. 60921062 and 61201328).}
\and Ming Gu\thanks{Department of Mathematics, University of California, Berkeley,
  CA 47920, US (mgu@math.\allowbreak berkeley.edu)
The research of Gu was supported in part by the Director,
Office of Science, Office of Advanced Scientific Computing
Research of the U.S. Department of Energy under Contract
No. DE-AC02-05CH11231, and by NSF Awards CCF-0830764 and and CCF-1319312.}
\and Beresford N. Parlett\thanks{Department of Mathematics, University of California, Berkeley,
CA 47920, US (parlett@math.\allowbreak berkeley.edu).}}
\maketitle

\begin{abstract} In this paper we present an improved dqds algorithm
for computing all the singular values of a bidiagonal matrix to high
relative accuracy. There are two key contributions: a novel deflation
strategy that improves the convergence for badly scaled matrices, and
some modifications to certain shift strategies that accelerate the
convergence for most bidiagonal matrices.  These techniques together
ensure linear worst case complexity of the improved algorithm
(denoted by V5).  Our extensive numerical experiments indicate that V5
is typically 1.2x--4x faster than DLASQ (the LAPACK-3.4.0
implementation of dqds) without any degradation in accuracy. On
matrices for which DLASQ shows very slow convergence, V5 can be
3x--10x faster.  At the end of this paper, a hybrid algorithm (HDLASQ)
is developed by combining our improvements with the aggressive early
deflation strategy (AggDef2 in [SIAM J. Matrix Anal. Appl., 33(2012),
22-51]).  Numerical results show that HDLASQ is the fastest among
these different versions.
\end{abstract}



\begin{keywords}
dqds, singular value, d-deflation, DLASQ, LAPACK
\end{keywords}

\begin{AMS}
15A18, 65F15, 65F30
\end{AMS}

\pagestyle{myheadings} \thispagestyle{plain} \markboth{Modified implementation}{dqds algorithm}

\section{Introduction}
The dqds (differential quotient difference with shifts) algorithm
of~\cite{Fernando94} computes the singular values of bidiagonal
matrices to high relative accuracy.  A detailed account of its
efficient implementation, which is essentially the LAPACK code, can be
found in~\cite{Parlett-Imp}. One critical technique responsible for
its success is the rather complex shift strategy. An aggressive early
deflation (AED) strategy is suggested in~\cite{dqds-Agg} to further
enhance performance for large matrices.

Our work is motivated by the recent discovery of significant slowdown
of the LAPACK routine DLASQ\footnote{Version 3.4.0 or earlier. The lastest version of DLASQ has already adapted our
strategies and therefore runs in linear time in the worst case.}  on
certain matrices which have diagonal and off-diagonal entries that are
of varying magnitude and are disordered (far from monotone
decreasing). The improvements of our algorithm are
\begin{enumerate}
\item the ability to deflate converged singular values that are far
from the bottom of the matrix;
\item an improved bound on the current smallest singular value;
\item some modifications to certain shift strategies;
\item a guarantee that the convergence is never slower than that for
the bisection algorithm.
\end{enumerate}

Our improved implementation (denoted by V5) is 1.2x-4x faster than
DLASQ in general and up to 10x faster for matrices for which the dqds
algorithm converges slowly. It should be mentioned that the dqds
algorithm has found other uses than computation of the singular values
and the ideas in this paper may be of value in those situations as
well~\cite{Carla-thesis}.

V5 is never slower than the bisection algorithm in convergence, which
establishes its linear complexity in the worst case and ensures a
more robust implementation of the dqds algorithm. There is a similar
result for the QR algorithm with Wilkinson shift for the symmetric
tridiagonal eigenproblems~\cite{Parlett-book}.

V5 and AED are complementary: AED may help for matrices which are easy
for dqds, whereas V5 improves the convergence for the disordered matrices.
By combining V5 with AED, a more efficient algorithm is obtained in
section~\ref{sec:Num-dqds}.  This hybrid algorithm would be denoted by
HDLASQ and is shown to be faster than DLASQ, V5 and AggDef2 (proposed
in~\cite{dqds-Agg}) for both the easy and difficult matrices.

This paper is organized as follows. Sections~\ref{sec:essential}
and~\ref{sec:lapack} introduce the dqds algorithm and its LAPACK
implementation DLASQ, respectively. Section~\ref{sec:improvement}
discusses the five important improvements over DLASQ.
Section~\ref{sec:finiteCov} proves the linear worst case complexity of
V5.
Section~\ref{sec:Num-dqds} compares V5 with DLASQ and AggDef2, and introduces
the hybrid algorithm HDLASQ.
Numerous experiments on synthetic and practical matrices are also
reported in section~\ref{sec:Num-dqds}.
Section~\ref{sec:conclusion} presents the conclusions.

\section{Essential information on dqds}
\label{sec:essential}

In this section we gather essential properties of the dqds algorithm.
The reader is expected to have some acquaintance with
this algorithm.  For an introduction to the topic we
recommend~\cite{Parlett-Acta} and for historical
details~\cite{Fernando94}. By convention our bidiagonal
matrices are all upper bidiagonal. Our notation is as follows
\begin{equation}
\label{eq:B}
B= \begin{bmatrix}  a_1 & b_1 &&&& \\ & a_2 & b_2 &&& \\ & & \cdotp & \cdotp && \\
    & & & \cdotp & \cdotp & \\ & & & & a_{n-1} &b_{n-1} \\ &&&&& a_n\end{bmatrix},
\end{equation}
and all the elements of $B$ are positive.

In addition to the dqds algorithm we refer to a simpler procedure
\emph{oqd} (orthogonal qd) that transforms $B$ into $\hat{B}$ so that
$$BB^T = \hat{B}^T\hat{B},$$ see~\cite{Fernando94}.

\fbox{
\begin{minipage}[t]{5.5cm}
\begin{algorithm}[oqd]
\label{alg:oqd}
\begin{center}
\begin{description}
\item $\tilde{a}_1 = a_1$
\item {\bf for} $k=1, \cdots,n-1$
\item \hspace{0.5cm} $\hat{a}_k:=\sqrt{\tilde{a}_k^2+b_k^2}$
\item \hspace{0.5cm} $\hat{b}_k:=b_k*(a_{k+1}/\hat{a}_k)$
\item \hspace{0.5cm} $\tilde{a}_{k+1}:=\tilde{a}_k*(a_{k+1}/\hat{a}_k)$
\item {\bf end for}
\item $\hat{a}_n := \tilde{a}_n$
\end{description}
\end{center}
\end{algorithm}
\end{minipage}
\quad
\begin{minipage}[t]{5.5cm}
\begin{algorithm}[dqds]
\label{alg:dqds}
\begin{description}
\item $d_1=q_1-s$,
\item {\bf for} $k=1,\cdots, n-1$
\item \hspace{0.5cm} $\hat{q}_k:=d_k+e_k$
\item \hspace{0.5cm} $\hat{e}_k:=e_k*(q_{k+1}/\hat{q}_k)$
\item \hspace{0.5cm} $d_{k+1}=d_k*(q_{k+1}/\hat{q}_k)-s$
\item {\bf end for}
\item $\hat{q}_n := d_n$
\end{description}
\end{algorithm}
\end{minipage} }

The dqd algorithm can be obtained by squaring the variables in the oqd algorithm,
\begin{equation}
\label{eq:qe}
q_k=a_k^2, \quad e_k=b_k^2,\quad  k=1,2,\cdots,n, \quad b_n = 0.
\end{equation}
The dqds algorithm is obtained by incorporating a shift in the dqd algorithm.
The shift will be denoted by \emph{s}.

The point of introducing oqd is to exhibit the relation of the
$\{q,e\}$ variables to the $\{a,b\}$ variables of $B$.  However an
alternative connection is to define
\[
L = \begin{bmatrix} 1 &&&&& \\ e_1 & 1 &&&& \\ & e_2 & . &&& \\ &&.& . && \\
    &&&. & 1 & \\ &&&&e_{n-1}&1  \end{bmatrix},
\quad
U= \begin{bmatrix}  q_1 & 1 &&&& \\ & q_2 & 1 &&& \\ & & . & . && \\
    & & & . & . & \\ & & & &. &1 \\ &&&&& q_n\end{bmatrix}.
\]
With some algebraic manipulations, one can show that the dqds algorithm implements the transform
\begin{equation}
\label{eq:ul1}
\hat{L} \hat{U} = UL - s I.
\end{equation}
One step of the dqds transform is to compute $\hat{L}$ and $\hat{U}$
from $L$ and $U$.

An essential ingredient in the success of dqds is that it
never explicitly forms any products $UL$ or $LU$ but works
entirely with the factors.
Furthermore, $LU$ and $B^TB$ are
connected by a diagonal similarity transformation~\cite{Parlett-Acta},
\begin{equation}
\label{eq:ulbtb}
LU=\Delta B^T B \Delta^{-1},
\end{equation}
where
\[
\Delta = \diag(1,\pi_1,\pi_1\pi_2,\ldots,\pi_1\cdots\pi_{n-1}), \quad \pi_i=a_ib_i.
\]
Therefore $LU$ and $B^TB$ have the same eigenvalues, and
$\sigma(B)=\sqrt{\lambda(LU)}$.

Recall that one step of the LR algorithm~\cite{BeresfordGutknech} transforms the matrix $A$ into
matrix $\hat{A}$ by
\[
\begin{split}
A-s I & = \tilde{L}\tilde{U}, \\
\hat{A} & = \tilde{U}\tilde{L}+s I,
\end{split}
\]
where $\tilde{L}$ is a lower triangular matrix and $\tilde{U}$ is an upper triangular matrix.
Note that $\hat{A}=\tilde{L}^{-1}A\tilde{L}$. We say that the shift is restored.
In contrast to the LR algorithm, the dqds transform is a non-restoring similarity transformation on
$UL$,
\begin{equation}
\label{eq:ul2}
\hat{U}\hat{L} = \hat{L}^{-1}(\hat{L}\hat{U})\hat{L} = \hat{L}^{-1}(UL - s I)\hat{L}.
\end{equation}
Notice that $\hat{U}\hat{L}$ is not similar to $UL$ and the shift $s$
is not restored. After each dqds transform, all eigenvalues are
reduced by $s$.  Because of the non-restoring feature, the dqds
algorithm must keep track of the accumulated shifts, $ S =
\sum_{i=1}^k s_i$.  

The dqds algorithm is the repeated applications of the dqds transform
with a sequence of well chosen shifts.  This algorithm checks, before
each transform, whether $e_{n-2}$ or $e_{n-1}$ is negligible.  If so
then one or two eigenvalues have been computed accurately and the
computation resumes on a smaller array. This process is called
deflation.  A well known feature of the QR-type and LR-type algorithms
is that the larger entries migrate gradually to the top of the matrix
and the smaller ones precipitate to the bottom.  The smaller they are
the quicker they fall.  That is why the classical deflation strategies
detect the bottom $2\times 2$ submatrix.

Detecting small entries above the bottom is also very important.
Setting a negligible $e_k$ to zero where $k < n-2$ is usually called
\emph{splitting}~\cite{Parlett-Imp} instead of deflation.  In this
case, a big problem breaks into two independent smaller problems.  It
is vital to check for splitting since, if $e_k=0$, the two subarrays
should be treated independently; whereas a tiny $e_k\neq 0$ could
signficantly slow down convergence if left untreated. The criteria for
negligibility have received careful attention and the tests can be
complicated. A novel deflation strategy is proposed in
section~\ref{sec:dmin} for the case when an intermediate variable
($e_k$ or $d_k$ for $k<n-2$) is negligible.

In the 1960s Kahan proved that a bidiagonal matrix defines
all its singular values to high relative accuracy, see~\cite{DK}.
In the context of dqds this precious
feature will be preserved if all the $\{q,e\}$ variables remain
positive~\cite{Fernando94}. Thus the shift $s$ must obey
\begin{equation}
\label{eq:dqds-shift}
s < \lambda_{\min}(BB^T).
\end{equation}
This constraint means that the singular values must be found in
monotone increasing order.  This is in contrast to the QR algorithm
with shifts but the QR algorithm does not claim to compute the
eigenvalues to high relative accuracy except when the shifts are zero.

The $\{q,e\}$ variables require so little storage that it is sensible
to compute $\{\hat{q},\hat{e}\}$ in a separate location from
$\{q,e\}$, and then decide whether to accept or reject the transform.
If any new variables are negative the algorithm rejects the transform
and chooses a smaller shift.  This is called a \emph{failure}. The
ability to reject a transform permits an aggressive shift strategy.

\subsection{Some theoretical results}

In the following sections we use the $B$ notation to describe
the dqds algorithm.

\begin{theorem}[Theorem 2 in \cite{Fernando94}]
\label{thm:dqd}
Apply the dqd transform (not dqds) to a positive bidiagonal matrix $B$
and $d_1,\cdots,d_{n}$ are the intermediate values. Then
\begin{enumerate}
\item $\sigma_{\min}^2(\hat{B}) \leq \min_k\{d_k\}$,
\item $[(BB^T)^{-1}]_{k,k}=d_k^{-1}$,
\item $(\sum_{k=1}^{n}d_k^{-1})^{-1} \leq \sigma_{\min}^2$.
\end{enumerate}
\end{theorem}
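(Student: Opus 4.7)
The plan is to establish Part~2 first; Parts~1 and~3 then fall out quickly from standard facts about symmetric positive definite matrices.

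For Part~2, I work with the tridiagonal matrix $UL$ rather than with $BB^T$ directly. A short direct calculation, analogous to (\ref{eq:ulbtb}), shows that $UL$ is related to $BB^T$ by a diagonal similarity $UL = \tilde\Delta(BB^T)\tilde\Delta^{-1}$ with $\tilde\Delta=\mbox{diag}(1,\pi_1,\pi_1\pi_2,\ldots,\pi_1\cdots\pi_{n-1})$, since the diagonal entries of both matrices are $q_i+e_i$ and the off-diagonals agree up to the factors $\pi_i=a_ib_i$. Diagonal similarity preserves diagonal entries of the inverse, so $[(BB^T)^{-1}]_{kk}=[(UL)^{-1}]_{kk}$. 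By Cramer's rule, $[(UL)^{-1}]_{kk}=\det(M_k)/\det(UL)$, where $M_k$ is $UL$ with row and column $k$ deleted. Tridiagonality makes $M_k$ block diagonal with a leading block $T_{\mathrm{top}}$ of size $k-1$ and a trailing block $T_{\mathrm{bot}}$ of size $n-k$.

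Each of the three determinants admits a clean closed form. For $T_{\mathrm{bot}}$, the trailing $(n-k)\times(n-k)$ submatrix of $UL$ equals the product of the trailing $(n-k)\times(n-k)$ blocks of $U$ and $L$, because for $i,j\geq k+1$ the only indices $l$ that contribute to $(UL)_{ij}$ satisfy $l\geq k+1$; this yields $\det(T_{\mathrm{bot}})=q_{k+1}\cdots q_n$. For $T_{\mathrm{top}}$ I would invoke (\ref{eq:ul1}) with $s=0$, i.e.\ $\hat L\hat U=UL$, and use the general fact that the leading blocks of a product of a unit lower triangular and an upper triangular matrix factor cleanly; hence $\det(T_{\mathrm{top}})=\hat q_1\hat q_2\cdots\hat q_{k-1}$. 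Combined with $\det(UL)=\det(U)\det(L)=q_1\cdots q_n$ and the telescoping identity $\hat q_1\cdots\hat q_{k-1}=q_1\cdots q_k/d_k$ (which follows by induction from the recurrence $d_{j+1}\hat q_j=d_jq_{j+1}$), the ratio $\det(M_k)/\det(UL)$ collapses to exactly $1/d_k$, proving Part~2.

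Parts~1 and~3 are then immediate. Since $BB^T$ is symmetric positive definite, every diagonal entry of $(BB^T)^{-1}$ is bounded by the largest eigenvalue of the inverse, so $1/d_k=[(BB^T)^{-1}]_{kk}\leq 1/\sigma_{\min}^2(B)$, i.e.\ $d_k\geq\sigma_{\min}^2(B)$ for each $k$; since the oqd relation $BB^T=\hat B^T\hat B$ gives $\sigma_{\min}(\hat B)=\sigma_{\min}(B)$, taking the minimum over $k$ yields Part~1. For Part~3, summing gives $\sum_k d_k^{-1}=\mbox{tr}((BB^T)^{-1})=\sum_i\sigma_i^{-2}\geq\sigma_{\min}^{-2}$, which rearranges to $(\sum_k d_k^{-1})^{-1}\leq\sigma_{\min}^2$.

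The main technical point is the identification of the leading block: $T_{\mathrm{top}}$ is \emph{not} literally the product $U_hL_h$ of the leading blocks of $U$ and $L$, because the term $U_{k-1,k}L_{k,k-1}=e_{k-1}$ spills in from outside the block and modifies the $(k-1,k-1)$ entry. One must instead recognize $T_{\mathrm{top}}$ through the $\hat L\hat U=UL$ factorization, which does restrict cleanly to leading submatrices. Once that identification is made, the rest is routine determinantal algebra together with the telescoping recurrence for $d_k$.
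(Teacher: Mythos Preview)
The paper itself does not prove this theorem; it is quoted from \cite{Fernando94} without argument, so there is no in-paper proof to compare against. Assessed on its own, your argument is correct and follows a clean determinantal route: identify $[(UL)^{-1}]_{kk}$ via Cramer's rule, factor the two diagonal blocks of the deleted minor using the trailing block of $UL$ and the leading block of $\hat L\hat U=UL$, and collapse via the telescoping identity $\hat q_1\cdots\hat q_{k-1}=q_1\cdots q_k/d_k$ coming from $d_{j+1}\hat q_j=d_jq_{j+1}$. Parts~1 and~3 then follow from the Rayleigh-quotient bound and the trace identity exactly as you say.

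One cosmetic correction: the diagonal similarity $UL=\tilde\Delta(BB^T)\tilde\Delta^{-1}$ does hold, but not with the weights $\pi_i=a_ib_i$ you write. A direct check of the off-diagonals gives $\tilde\delta_{i+1}/\tilde\delta_i=b_ia_{i+1}$, not $a_ib_i$; the choice $\pi_i=a_ib_i$ belongs to the companion similarity $LU\sim B^TB$ in~(\ref{eq:ulbtb}). This does not affect your proof, since the only property you use is that the similarity is diagonal and hence preserves the diagonal entries of the inverse. Your observation that $T_{\mathrm{top}}$ must be read off from the $\hat L\hat U$ factorization rather than from $U_hL_h$ (because of the $e_{k-1}$ spill-in) is exactly the right point and is the only place where care is needed.
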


From item 1
$d_{\min} = \min_k\{d_k\} \ge \sigma_{\min}^2(\hat{B})$.
Asymptotically $d_{\min}$ is a very good estimate of
$\sigma_{\min}^2(\hat{B})$, and therefore
DLASQ records $d_{\min}$ as a guide to choosing a shift.
But at the early stage $d_{\min}$ may be too
big. There is more on this topic later.
\begin{corollary}
\label{cor:ming}
With the notation in Theorem~\ref{thm:dqd}, we have
\begin{equation}\label{eq:dmin1}
\frac{1}{n} d_{\min} \leq \sigma_{\min}^2(\hat{B}) \leq d_{\min} \leq n \sigma_{\min}^2(\hat{B}).
\end{equation}
\end{corollary}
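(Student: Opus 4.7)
The corollary combines Theorem~\ref{thm:dqd} items 1 and 3 in a single sandwich, so the plan is to read off each inequality from one of the two items.

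The middle inequality $\sigma_{\min}^2(\hat{B}) \leq d_{\min}$ is exactly item 1 of Theorem~\ref{thm:dqd}, once one recalls that $d_{\min} = \min_k d_k$. Nothing further is needed here.

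For the outer inequalities, the plan is to start from item 3, namely $\sigma_{\min}^{-2} \leq \sum_{k=1}^{n} d_k^{-1}$. Since all $d_k$ are positive and $d_k \geq d_{\min}$ for every $k$, we have $d_k^{-1} \leq d_{\min}^{-1}$, so the sum is bounded by $n/d_{\min}$. Rearranging yields $d_{\min} \leq n\,\sigma_{\min}^2(\hat{B})$, which is the right-hand inequality of the corollary; dividing both sides by $n$ (again using positivity) produces the left-hand inequality $\tfrac{1}{n}d_{\min} \leq \sigma_{\min}^2(\hat{B})$. Note that the leftmost and rightmost inequalities are algebraically equivalent, so writing them both is only a matter of display.

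There is no real obstacle: the proof is a one-line application of the arithmetic inequality $\sum_k d_k^{-1} \leq n\,d_{\min}^{-1}$, valid because positivity of the $d_k$'s is guaranteed by the dqd (shiftless) assumption of Theorem~\ref{thm:dqd}. The only minor care needed is to invoke $\sigma_{\min}^2(\hat{B}) = \sigma_{\min}^2(B)$, which follows from $BB^T = \hat{B}^T\hat{B}$, so that item 3's $\sigma_{\min}^2$ refers to the same quantity appearing in item 1.
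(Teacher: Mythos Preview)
Your proposal is correct and is exactly the intended argument: the paper states Corollary~\ref{cor:ming} without proof, treating it as an immediate consequence of items~1 and~3 of Theorem~\ref{thm:dqd}, and your derivation (bounding $\sum_k d_k^{-1}\le n\,d_{\min}^{-1}$ and invoking $\sigma_{\min}^2(B)=\sigma_{\min}^2(\hat B)$ from $BB^T=\hat B^T\hat B$) is precisely how one fills in that gap.
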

Corollary~\ref{cor:ming} implies that $d_{\min}$ becomes negligible when the matrix becomes nearly singular.
It is the basis of our deflation strategy in section
\ref{sec:dmin}.
There are similar results for the dqds algorithm.

\begin{theorem}[Theorem 3 in \cite{Fernando94}]
\label{thm:bup2}
If the dqds with shift $s \ge 0$ transforms positive
bidiagonal matrix $B$ into positive $\hat{B}$ with
intermediate quantities $d_1,\cdots,d_{n}$ then
\begin{enumerate}
\item $\sigma_{\min}^2(\hat{B}) \leq \min_k\{d_k\}$,
\item $[(BB^T)^{-1}]_{k,k} < d_k^{-1}$,
\item $(\sum_{k=1}^{n}d_k^{-1})^{-1} < \sigma_{\min}^2(\hat{B})$.
\end{enumerate}
\end{theorem}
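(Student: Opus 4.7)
The plan is to reduce Theorem~\ref{thm:bup2} to Theorem~\ref{thm:dqd} by a pointwise comparison with the shift-free transform. Write $d_k^{(0)}$ for the intermediate quantities produced by applying dqd (Algorithm~\ref{alg:dqds} with $s=0$) to the same $B$; Theorem~\ref{thm:dqd} then supplies $[(BB^T)^{-1}]_{k,k} = 1/d_k^{(0)}$, $\sigma_{\min}^2(B)\le\min_k d_k^{(0)}$, and $(\sum_k 1/d_k^{(0)})^{-1}\le\sigma_{\min}^2(B)$ as inputs. I also record the spectral shift identity $\sigma_k^2(\hat B)=\sigma_k^2(B)-s$, which is immediate from $\hat L\hat U = UL - sI$ together with the diagonal similarity $UL\sim BB^T$ (a direct analog of \eqref{eq:ulbtb}) and the $\hat B$-side version $\hat L\hat U\sim\hat B^T\hat B$.

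The central auxiliary step is the pointwise bound $d_k + s \le d_k^{(0)}$ for every $k$, proved by induction on $k$. The base case is trivial since $d_1 + s = q_1 = d_1^{(0)}$. For the inductive step, both algorithms obey the common recursions $d_{k+1}+s = d_k\,q_{k+1}/(d_k+e_k)$ and $d_{k+1}^{(0)} = d_k^{(0)}\,q_{k+1}/(d_k^{(0)}+e_k)$, and the map $t\mapsto q_{k+1}t/(t+e_k)$ is strictly increasing on $t>0$ since $e_k>0$. Hence $d_k\le d_k^{(0)}-s\le d_k^{(0)}$ propagates to $d_{k+1}+s\le d_{k+1}^{(0)}$, with strict inequality at every index $\ge 2$ as soon as $s>0$. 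Item~2 then follows immediately: $d_k \le d_k^{(0)} - s < d_k^{(0)}$ yields $1/d_k > 1/d_k^{(0)} = [(BB^T)^{-1}]_{k,k}$.

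For item~1, I identify $\hat q_k = d_k + e_k$ as the $k$-th pivot in the LU factorization of $UL-sI$. Since $UL-sI$ is diagonally similar to the symmetric positive definite tridiagonal $BB^T-sI$, these pivots coincide with its Cholesky pivots, so Cauchy interlacing gives $\hat q_k\ge\lambda_{\min}(BB^T-sI)=\sigma_{\min}^2(\hat B)$; a short secondary induction, anchored at $d_1 = q_1 - s\ge\sigma_{\min}^2(\hat B)$ (a diagonal-majorizes-minimum-eigenvalue estimate for $B^TB-sI$) and propagated through the same monotone map, then delivers $d_k\ge\sigma_{\min}^2(\hat B)$ for every $k$. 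For item~3, I first sum the strict inequalities of item~2 to obtain $\sum_k 1/d_k > \operatorname{trace}((BB^T)^{-1})$, and then upgrade this to the required strict bound $\sum_k 1/d_k > \sigma_{\min}^{-2}(\hat B)$ using the spectral shift identity together with the positive contribution of the non-minimal eigenvalues of $BB^T-sI$.

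The step I expect to be the main obstacle is this last upgrade in item~3: as $s$ approaches $\sigma_{\min}^2(B)$, $\sigma_{\min}^{-2}(\hat B)$ can exceed $\operatorname{trace}((BB^T)^{-1})$ by an arbitrary amount, so a termwise estimate cannot close the gap. Combining the comparison lemma $d_k+s\le d_k^{(0)}$ with the monotone dependence of $d_k$ on $s$ should suffice, but tracking how the cumulative effect of the shift distributes across the sum $\sum_k 1/d_k$ is the delicate part.
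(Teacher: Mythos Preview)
The paper does not prove this theorem; it is quoted verbatim from \cite{Fernando94} without argument, so there is no in-paper proof to benchmark against. That said, your plan has a concrete gap.

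Your comparison lemma $d_k + s \le d_k^{(0)}$ is correct and yields item~2 cleanly. The problem is your argument for item~1. From the inductive hypothesis $d_k \ge \mu := \sigma_{\min}^2(\hat B)$ and the monotone map $f(t)=q_{k+1}t/(t+e_k)$ you only get
\[
d_{k+1} \;=\; f(d_k)-s \;\ge\; f(\mu)-s \;=\; \frac{\mu\,q_{k+1}}{\mu+e_k}-s,
\]
and there is no reason this exceeds $\mu$: it would require $q_{k+1}\ge \sigma_{\min}^2(B)\bigl(1+e_k/\mu\bigr)$, which fails once $s$ is close to $\sigma_{\min}^2(B)$ so that $\mu$ is tiny. (Already for $n=2$, $q_1=4$, $e_1=1$, $q_2=2$, $s=0.5$, the induction step gives only $d_2\ge 0.468$ while $\mu\approx 0.938$.) A workable repair keeps your pivot idea but applies it to the truncated problem: $d_k$ depends only on $q_1,e_1,\ldots,e_{k-1},q_k$ and is exactly the last $LDL^T$ pivot of $B_kB_k^T-sI$, where $B_k$ is the leading $k\times k$ bidiagonal block, so your interlacing argument gives $d_k\ge\lambda_{\min}(B_kB_k^T-sI)=\sigma_{\min}^2(B_k)-s$; since $B_k^TB_k$ is the leading principal submatrix of $B^TB$, Cauchy interlacing gives $\sigma_{\min}^2(B_k)\ge\sigma_{\min}^2(B)$ and hence $d_k\ge\sigma_{\min}^2(\hat B)$.

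Your self-diagnosis on item~3 is accurate: the route through $\operatorname{trace}((BB^T)^{-1})$ cannot close, because that trace is fixed while $\sigma_{\min}^{-2}(\hat B)$ diverges as $s\uparrow\sigma_{\min}^2(B)$, and the comparison $d_k+s\le d_k^{(0)}$ only sharpens the bound to $\sum_k 1/d_k \ge \sum_k 1/(d_k^{(0)}-s)$, which is still not obviously larger than $1/(\sigma_n^2-s)$. As written, the plan does not supply the missing estimate; the identity $\hat B^T\hat B = BB^T - sI$ (which follows from $\hat q_k\hat e_k=e_kq_{k+1}$ and $\hat q_k+\hat e_{k-1}=q_k+e_k-s$) is the natural replacement for the role $BB^T$ plays in Theorem~\ref{thm:dqd}, and is what the original reference exploits.
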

\begin{corollary}
\label{cor:ming2}
\begin{equation}
\label{eq:dmin}
\begin{split}
\frac{1}{n} d_{\min} <  & \sigma_{\min}^2(\hat{B}) \leq d_{\min} < n \sigma_{\min}^2(\hat{B}).
\end{split}
\end{equation}
\end{corollary}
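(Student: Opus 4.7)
The plan is to reduce the corollary to the three items of Theorem~\ref{thm:bup2}, exactly mirroring the route used for Corollary~\ref{cor:ming}. The key observation is that two of the four displayed bounds are algebraically equivalent: $\tfrac{1}{n}d_{\min} < \sigma_{\min}^2(\hat{B})$ is the same statement as $d_{\min} < n\,\sigma_{\min}^2(\hat{B})$, so there are really only two inequalities to establish, plus the trivial rewriting.

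First I would handle the middle inequality $\sigma_{\min}^2(\hat{B}) \le d_{\min}$: this is precisely item~1 of Theorem~\ref{thm:bup2}, since $d_{\min} = \min_k d_k$. Next, for the lower bound $\tfrac{1}{n} d_{\min} < \sigma_{\min}^2(\hat{B})$, I would start from item~3, which states
\begin{equation*}
\Bigl(\sum_{k=1}^{n} d_k^{-1}\Bigr)^{-1} < \sigma_{\min}^2(\hat{B}).
\end{equation*}
Since the dqds transform is accepted only when every $d_k$ is positive, and $d_k \ge d_{\min} > 0$ for each $k$, we have $d_k^{-1} \le d_{\min}^{-1}$. Summing over $k$ gives $\sum_{k=1}^n d_k^{-1} \le n\,d_{\min}^{-1}$, hence
\begin{equation*}
\frac{d_{\min}}{n} \le \Bigl(\sum_{k=1}^{n} d_k^{-1}\Bigr)^{-1} < \sigma_{\min}^2(\hat{B}).
\end{equation*}
Multiplying through by $n$ immediately yields the rightmost bound $d_{\min} < n\,\sigma_{\min}^2(\hat{B})$, closing the chain.

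There is essentially no obstacle here beyond careful accounting: the only point that deserves a line of comment is the preservation of the strict inequality, which comes from item~3 of Theorem~\ref{thm:bup2} (strict because the shift $s \ge 0$ strictly contributes in the dqds setting, in contrast to the pure dqd case where equality can occur in Theorem~\ref{thm:dqd}). The positivity hypothesis on $\hat{B}$ (equivalently on all $d_k$) is exactly what licenses the harmonic-mean bound used above, so the corollary follows with no further work.
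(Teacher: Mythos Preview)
Your proof is correct and is exactly the argument the paper has in mind: the corollary is stated without proof because it follows immediately from items~1 and~3 of Theorem~\ref{thm:bup2} via the harmonic-mean bound $\bigl(\sum_k d_k^{-1}\bigr)^{-1}\ge d_{\min}/n$, just as you wrote.
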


These two corollaries have very important practical
implications.
\begin{itemize}
\item  A good shift should
  always be in $[\frac{d_{\min}}{n}, d_{\min}]$.
\item A very small $\sigma_n^2(\hat{B})$ automatically
  implies a very small $d_{\min}$.
  If $d_{\min}$ is negligible then it is possible to deflate the matrix even if $d_{\min}=d_k$, $k\ll n$, see section~\ref{sec:dmin}.
\end{itemize}

\section{Features of the current LAPACK implementation}
\label{sec:lapack}

The algorithm presented in~\cite{Parlett-Imp} was designed for speed
on all the test matrices available before 2000.  It was on average 5x
faster than the Demmel-Kahan QR code for computing singular
values~\cite{Parlett-Imp}.  The drive for efficiency produced a more
complex shift strategy than that used in the previous LR/QR codes for
the tridiagonal eigenvalue problems.  As an example, by unrolling the
last 3 minor steps, the code keeps values $d_{n-2}, d_{n-1}$ and $d_n$
available so that, in the asymptotic regime, there is enough
information to make a good approximation to the new $\sigma_{\min}$
even after one or two eigenvalues are deflated. For example, $d_{n-2}$
might be $d_{\min}$ for the deflated array.

The code keeps the information for both $\{q, e\}$ and $\{\hat{q},
\hat{e}\}$.  If any entry in $\hat{q}$ or $\hat{e}$ is negative the
transform is rejected and the step is considered a failure and aborted.
Consequently an aggressive shift strategy may be employed. But the
number of failures should be kept small. Too many failures will surely
degrade the performance.  To enhance data locality, the variables
are held in one linear array
\[ Z=\{q_1, \hat{q}_1, e_1, \hat{e}_1, q_2, \hat{q}_2 ,e_2, \hat{e}_2,
\cdots, q_n, \hat{q}_n \}.  \]
The dqds maps $\{q,e\}$ to $\{\hat{q},\hat{e}\}$ and  vice versa
to avoid data movement.  This is called \emph{ping-pong}
implementation in~\cite{Parlett-Imp}.

The implementation of dqds in the LAPACK, as of 2000, is based on the
following perception.
\begin{quote}
``An explicit conditional statement (if-then-else) in an inner loop
impedes efficient implementation on a pipelined arithmetic unit."
\end{quote}

Consequently the division in the dqds loop, $t=q_{k+1}/\hat{q}_k$, is
not protected from incurring an exception (divide by zero or
overflow). However, the powerful feature of arithmetic units
conforming to IEEE floating point standard 754 is that computation is
not held up by an exception. At the end of the loop the code tests
whether an $\infty$ or a NaN (not a number) occurred and acts
appropriately.

The LAPACK implementation also assumes that a good compiler will
implement the intrinsic FORTRAN functions such as MIN(A,B) or MAX(A,B)
efficiently. Hence the valuable variable $d_{\min}$ is computed via
$d_{\min}$= \text{MIN}($d_k, d_{\min}$) in the inner loop.

This implementation also gives up the knowledge of the index
at which $d_{\min}$ receives its final value since this would
require an explicit conditional,
\begin{enumerate}[\hspace{0.5cm} ]
\item if $d_k< d_{\min}$ then
\item \hspace{0.5cm} dmink=$k$;  $d_{\min}=d_k$;
\item end if
\end{enumerate}
where the position of $d_{\min}$ is denoted by $dmink$. However, our
efficiency enhancements require knowledge of the index of $d_{\min}$.
This explicit conditional statement need not impede performance
provided that it is placed, not in its natural position, but
immediately after the division.  This is because division is so slow
relative to other operations, such as comparison, that the conditional
statement can be completed before the preceding division finishes.
Technically this requires us to update $d_{\min}$ one minor step late
but that is easily dealt with.  Algorithm~\ref{alg:innerloop} is the
inner loop of a new implementation.

\fbox{
\begin{minipage}[t]{12cm}
\begin{algorithm}[inner loop of the dqds algorithm]
\label{alg:innerloop}
\begin{description}
\item \hspace{0.5cm} {\bf for} $k=1$ to $n-1$
\item \hspace{1.0cm} $\hat{q}_k =d_k+e_k$; $t = q_{k+1}/\hat{q}_k$;
\item \hspace{1.0cm} {\bf if} $d_k < 0$ {\bf then}
\item \hspace{1.5cm} exit \emph{(early failure)};
\item \hspace{1.0cm} {\bf else}
\item \hspace{1.5cm} {\bf if} $d_k < d_{\min}$ {\bf then}
\item \hspace{2.0cm} dmink=$k$; $d_{\min}=d_k$;
\item \hspace{1.5cm} {\bf end if}
\item \hspace{1.0cm} {\bf end if}
\item \hspace{1.0cm} $\hat{e}_k = e_k\cdotp t$; $d_{k+1}=d_k\cdotp t - s$;
\item \hspace{0.5cm} {\bf end for}
\item \hspace{0.5cm} {\bf if} $d_n < 0$ {\bf then}
\item \hspace{1.0cm} continue \emph{(late failure)};
\item \hspace{0.5cm} {\bf else if} $d_n < d_{\min}$ {\bf then}
\item \hspace{1.0cm} dmink=$n$; $d_{\min}=d_n$;
\item \hspace{0.5cm} {\bf end if}
\end{description}
\end{algorithm}
\end{minipage} }

If $d_k < 0$ ($k<n$) and shift $s>0$, then by Algorithm~\ref{alg:dqds}
the dqds algorithm may fail in three possible ways: $\hat{q}_j$ ($k< j
<n$) is negative or zero, or $\hat{q}_n<0$.  The case ($d_k<0, k<n$)
is called an \emph{early failure}.  A \emph{late failure} occurs when
the arrays $\{\hat{q}_k,\hat{e}_k\}$ are all positive except for the
last $\hat{q}_n=d_n<0$.  In the case of a late failure, a smaller
shift can be chosen as $s+d_n$, which is guaranteed to succeed.  This
property was discovered by H. Rutishauser. See~\cite{Parlett-Imp} for
details.

\section{Improvements}
\label{sec:improvement}

In this section, we summarize the improvements of our implementation
over DLASQ, which can be divided into two types:
\emph{deflation strategies} and \emph{shift strategies}.

\subsection{Setting negligible $d_{\min}$ to zero}
\label{sec:dmin}

Recall that the dqds algorithm is non-restoring. The algorithm tries
at every step to make the current matrix singular.  The
positive $\{q,e\}$ array defines both a positive bidiagonal matrix $B$
and the matrices $L$, $U$, see~\cite{Parlett-Acta}. It can happen that
a leading principal submatrix of $B$ becomes almost singular long
before any negligible entries appear at the bottom of the matrix.
This situation is not easily detected by a simple inspection of the
entries $\{q_k,e_k\}$, or say $\{a_k,b_k\}$.

To explain, it is best to go back to the \emph{oqd} transform from
$B^T$ to $\hat{B}$, and consider the process after ($k-1$)
minor steps shown in the following equation,
\begin{equation}
\label{eq:singleton}
B^{(k)} = Q_k B^T = \begin{bmatrix}
\hat{a}_1 & \hat{b}_1 &&&&&&&& \\
0 & \hat{a}_2 & \hat{b}_2 &&&&&&& \\
& 0 & \cdotp & \cdotp &&&&&& \\
&&0 & \hat{a}_{k-1} & \hat{b}_{k-1} &&&&& \\
&&&0 & \tilde{a}_k & 0 &&&& \\
&&&& b_k & a_{k+1} & 0 &&&\\
&&&&& b_{k+1} & a_{k+2} & & \\
&&&&&& \cdotp & \cdotp & 0 & \\
&&&&&&& b_{n-1} & a_n
\end{bmatrix}.
\end{equation}

The striking feature is that row $k$ is a singleton, its entry is
$\tilde{a}_k$ and $\tilde{a}_k^2 = d_k$.  Rows ($1:k$) are upper
bidiagonal, rows ($k:n$) are lower bidiagonal.  Such matrices are
often described as \emph{twisted}.  Corollary~\ref{cor:ming} says
$\tilde{a}_k$ would be small when $\hat{B}$ is nearly singular.  How
small must $\tilde{a}_k$ be to declare it negligible?  Define new
matrices $\tilde{B}$ and $E$, using equation~\eqref{eq:singleton}, by
\[
B^{(k)}=Q_k B^T = \tilde{B}+E,
\]
where row $k$ of $\tilde{B}$ is null and
$E=\text{diag}(0,\cdots,0,\tilde{a}_k,0,\cdots,0)$.  Observe that
$E\tilde{B}=E^T\tilde{B}=0$. Hence,
\begin{equation}
\label{eq:e2}
\begin{split}
B^{(k)T}B^{(k)} & =(\tilde{B}+E)^T(\tilde{B}+E) \\
& = \tilde{B}^T\tilde{B}+\tilde{B}^TE+E^T\tilde{B}+E^TE \\
& = \tilde{B}^T\tilde{B} + E^TE.
\end{split}
\end{equation}

By Weyl's monotonicity theorem~\cite[Thm. 3.3.16]{Horn-book2}), for $i=1:n$,
\begin{equation}
|\sigma_i^2-\tilde{\sigma}_i^2| \le \|E^2\|=d_k.
\end{equation}
Here $\{\tilde{\sigma}_i^2\}$ are the ordered singular values of
$\tilde{B}$.  The non-restoring character of the dqds algorithm
entails that the desired eigenvalues are $\{\sigma_i^2+S\}$ where $S$
is the accumulated sum of shifts so far.  Consequently, $d_k$ (
$\tilde{a}_k^2$) may be set to zero, when
\[ d_k \le \epsilon S \le \epsilon (S+\sigma_i^2).  \]

How to deflate by exploiting such an event?
Examination of the inner loop of {\em oqd}, Algorithm~\ref{alg:oqd},
shows that with $\tilde{a}_k=0$, the algorithm simply moves the
remaining variables into new positions
\[
\hat{a}_j=b_j, \text{ }
\hat{b}_j=a_{j+1}, \text{ } j=k,\ldots, n-1, \text{ and  } \hat{a}_n=\tilde{a}_k=0.
\]
The bidiagonal $\hat{B}$ reveals its singularity ($\hat{a}_n = 0$) but
deflation requires that $\hat{b}_{n-1}$ also vanishes.  More work needs
to be done since $\hat{b}_{n-1}=a_n$ may not be negligible.  There are
two options.

{\bf Option A.} Apply the {\em oqd} transform to $\hat{B}$ to obtain
$\bar{B}$ and note that
$\bar{b}_{n-1}=\hat{b}_{n-1}(\hat{a}_n/\bar{a}_{n-1})=0$, $\bar{a}_n =
\tilde{a}_{n-1}(\hat{a}_n/\bar{a}_{n-1})=0$.  The new singular value
is $S+\hat{a}_n =S$ and $n \leftarrow n-1$.  In practice the dqd
transform is used, not oqd.

{\bf Option B.} The procedure invoked in the aggressive early
deflation algorithm, see~\cite{dqds-Agg}, is useful here.
In our implementation we use this option since we found it
usually saves some floating point operations over {\bf Option A}.
We describe it briefly.

Apply a carefully chosen sequence of plane rotations on the
right of $\hat{B}$ to chase the entry $\hat{b}_{n-1}$ up in the
last column of $\hat{B}$.  The sequence of `planes' is
$(n-1,n)$, $(n-2,n),(n-3,n),\cdots,(1,n)$.  The single
nonzero entry $\psi$ in the last column is called the bulge.  Its
initial value is $\hat{b}_{n-1}$ and it shrinks as it rises
up.  The expectation is that the bulge will
become negligible quickly, in fewer than 10 rotations.

Let ${\bf e}_i$, $i=1,\ldots, n$, be the $i$-th column of
an $n\times n$ identity matrix (to distinguish from $e_i$).
When the bulge is in position $(k,n), k\le
n-1$, the matrix can be written as $\overset{o}{B}+E$
where $E=\psi {\bf e}_k {\bf e}_n^T$ and $\overset{o}{B}$'s last row
and column are null.  Then
$(\overset{o}{B}+E)(\overset{o}{B}+E)^T=\overset{o}{B}\overset{o}{B^T}+\psi^2
{\bf e}_k{\bf e}_k^T$.
It turns out that the criterion
for neglecting the bulge $\psi$ is the same as the
criterion for neglecting $d_k$, namely
\[
\psi^2 \le \epsilon S,
\]
and $\psi^2$ is expressible in terms of $\{q_i,e_i\}$.

To be more specific, let $\psi = \sqrt{x}$. The bulge
moves upward by one position by applying a Givens transformation from the right~\cite{dqds-Agg}:
\[
\begin{bmatrix} * & * &  && \\ & * & * &&  \\
&& * & \sqrt{e_k} &  \\ &&& \sqrt{q_{k+1}} & \sqrt{x} \\&&&&0 \end{bmatrix} \underrightarrow{G_k}
\begin{bmatrix} * & * &&& \\ & * & * &&  \\ &&  & \sqrt{\bar{e}_k} & \sqrt{\bar{x}} \\ &&&\sqrt{\bar{q}_{k+1}}& 0 \\&&&&0 \end{bmatrix},
\]
where $G_k$ is an orthogonal matrix of the form
$\begin{bmatrix} c & s \\ s & -c \end{bmatrix}$ to keep the entries of $B$ positive.
The values $\bar{q}_{k+1}$, $\bar{e}_k$ and $\bar{x}$ can be computed as
\[
\bar{q}_{k+1} = q_{k+1}+x, \quad \bar{e}_k = \frac{q_{k+1}e_k}{q_{k+1}+x}, \quad
\bar{x} = \frac{x e_k}{q_{k+1}+x}.
\]
The whole algorithm is described as follows, where
$\bar{q}, \bar{e}$ and $\bar{x}$ are also denoted by $q, e$ and $x$ respectively.

\fbox{
\begin{minipage}[t]{12cm}
\begin{algorithm}[{\bf Option B}]
\begin{description}
\item \hspace{0.5cm} $x=e_{n-1}$;
\item \hspace{0.5cm} {\bf for} $k=n$-$1,n$-$2,\cdots, 2$
\item \hspace{1.0cm}  $t_1 = q_k$;
\item \hspace{1.0cm} $q_k=q_k+x$;
\item \hspace{1.0cm} $t_2=1/q_k$;
\item \hspace{1.0cm} $x= x\cdotp e_{k-1} \cdotp t_2$;
\item \hspace{1.0cm} $e_{k-1}=e_{k-1}\cdotp t_1 \cdotp t_2$;
\item \hspace{1.0cm} {\bf if} $x$ is negligible, {\bf break};
\item \hspace{0.5cm} {\bf end for}
\item \hspace{0.5cm} {\bf if} $k=1$, \quad $q_1=q_1+x$;
\end{description}
\label{alg:def3}
\end{algorithm}
\end{minipage}}

\begin{remark}
\label{rmk:zerodef}
The deflation strategy, setting negligible $d_{\min}$ to zero, is the
most important improvement over DLASQ and is `tailor-made' for the
disordered matrices for which the dqds algorithm shows slow
convergence.  This deflation will be called \emph{d-deflation
strategy} in later sections.
\end{remark}

\subsection{Improved criterion for late deflation}
\label{sec:newdefs}

A crude, but adequate, criterion for setting $b_{n-1}$ (in $B$) to
zero, is
\[
|b_{n-1}| < c \epsilon (\sqrt{S}+\sigma_{{\min}}(B)),
\]
where $\epsilon$ is the machine precision and $c$ is a modest
constant, for example $c=10$ in DLASQ and in our implementation.  In the
context of a $\{q, e\}$ array this same criterion becomes
approximately
\begin{equation}
\label{eq:crudecri}
e_{n-1} < (c\epsilon)^2 S.
\end{equation}

Note that $q_n=d_n$ usually becomes very small,
see Lemma 5.3~\cite{Aishima-Convergence}
or Lemma~\ref{lem:prov2} in the Appendix.
Instead of only testing $e_{n-1}$, a more refined criterion
arises from considering the trailing $2\times 2$ submatrix
of $BB^T$,
\[
\begin{bmatrix} a_{n-1}^2+b_{n-1}^2 & b_{n-1}a_n \\ b_{n-1}a_n & a_n^2 \end{bmatrix}.
\]
By Weyl's theorem no eigenvalue of $BB^T$ changes by no more than
$b_{n-1}a_n$ if the $2\times 2$ matrix $\begin{bmatrix} 0 & b_{n-1}a_n
\\ b_{n-1}a_n & 0 \end{bmatrix}$ is subtracted from the trailing
$2\times 2$ submatrix shown above.  However the $(n-1,n-1)$ entry of
$BB^T$ still involves $b_{n-1}^2$.  No eigenvalue of $BB^T$ can change
by more than $b_{n-1}^2$ if it is neglected.  We can set it to zero
when it is negligible compared to either $S+\lambda_{\min}(BB^T)$ or
$a_{n-1}^2$.  Consequently the crude criterion~\eqref{eq:crudecri} may
be replaced by the following pair of tests:
\[
b_{n-1}^2 < c\epsilon \max (S,a_{n-1}^2) \text{ and } b_{n-1}a_n < c\epsilon(S+ \lambda_{\min}(BB^T).
\]

In the context of $\{q, e\}$ array, we use
\begin{equation}
\label{eq:parlett2}
e_{n-1} < c \epsilon \max(S,q_{n-1}) \text{ and } e_{n-1}q_n < (c\epsilon S)^2.
\end{equation}

The same arguments can give us a more refined test for splitting $B$
whenever $e_k$ and $e_kq_{k+1}$ are negligible.

\subsection{Updating the upper bound}

In his original papers on the qd algorithm, Rutishauser proposed
updating the upper and lower bounds, {\em sup} and {\em inf}, on
$\sigma_{\min}^2(B)$ at all times.  The original
paper~\cite{Fernando94} on the dqds algorithm followed the
recommendation but the implementation DLASQ~\cite{Parlett-Imp} omitted
to update {\em sup} when a transform failed.

A transform $\{q, e\}$ into $\{ \hat{q}, \hat{e}\}$ fails if any entry
in $\hat{q}$ or $\hat{e}$ is nonpositive (but $\hat{q}_n=0$ is
permitted). In this case the connection to a bidiagonal $\hat{B}$ is
lost. It can only happen if the shift $s$ exceeds $\sigma_{\min}(B)^2$
and thus {\em sup} can be set to $s$. This is valuable information.

Here is the pseudo-code, with $s$ being the shift.

\fbox{
\begin{minipage}[t]{12cm}
\begin{algorithm}[updating the upper bound] \vspace{0.1cm}
\begin{description}
\item \hspace{0.5cm} {\bf if} shift $s$ succeeds, {\bf then} \vspace{0.1cm}
\item \hspace{1.0cm} $sup$=$\min$\{$d_{\min}$, $sup$-$s$\}, \vspace{0.1cm}
\item \hspace{0.5cm} {\bf else if} shift $s$ fails, {\bf then} \vspace{0.1cm}
\item \hspace{1.0cm} $sup$=$\min$\{$s$, $sup$\},\vspace{0.1cm}
\item \hspace{0.5cm} {\bf end if}
\end{description}
\end{algorithm}
\end{minipage} }

In case of failure the transform $\hat{q}$ and $\hat{e}$ is discarded.
The shift is usually computed as $s=\alpha \cdotp d_{\min}$ in DLASQ,
where $\alpha$ is a parameter in $(0, 1)$.  We replace DLASQ's
$d_{\min}$ by \emph{sup} and the reward is a reduction in the number
of failures.  As mentioned before, $d_{\min}$ is used to estimate
$\lambda_{\min}(BB^T)$.  At the early stage, $d_{\min}$ may be too
large and $sup$ may be a much better upper bound than $d_{\min}$.

The negligibility of \emph{sup} or $d_{\min}$ reveals the convergence
of a shifted singular value to 0. DLASQ does not check its
negligibility, and so computes a complicated shift when shift = 0
would suffice.

\subsection{Twisted shift in the last $p$ rows}
\label{sec:twist}

When $d_{\min}$ is near the bottom, we use the twisted factorization
to choose a shift.  We call this shift strategy \emph{restricted
twisted shift strategy} since it is only used when $d_{\min}$ is in
the last $p$ rows. Our experiments suggest $p = 20$ is a good
choice. A similar strategy is used in DLASQ (Cases 4 and 5), when
$d_{\min}$ is in the last two rows.

The idea behind this shift strategy is to compute an approximate
eigenvector $z$ of $BB^T$ by focusing on the submatrix around
$d_{\min}$.  Using $z$ and the Rayleigh quotient residual, we can
compute a lower bound $\phi$ on the smallest eigenvalue with high
likelihood that $\phi < \sigma_{\min}(B)$, and $\phi$ can be used as a
shift, see section 6.3.3
of~\cite{Parlett-Imp} or the Appendix for details.

With the techniques similar to those in~\cite{Aishima-Convergence}, we
can show that the order of convergence for the twisted shift strategy
is $1.5$, see the following theorem.

\begin{theorem}
\label{thm:mainconv}
Assume the bidiagonal matrix $B$ has positive nonzero entries.
For the dqds algorithm with the twisted shift strategy, the
sequence $\{e_{n-1}^{(l)}\}_{l = 0}^{\infty}$ converges to
$0$ with order of convergence $1.5$.
\end{theorem}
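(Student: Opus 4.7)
The plan is to adapt the convergence analysis of Aishima, Matsuo and Murota~\cite{Aishima-Convergence} to the restricted twisted shift. The analysis hinges on two ingredients: an asymptotic recurrence for $e_{n-1}^{(l)}$ that involves the residual $\sigma_{\min}^2(B^{(l)}) - s^{(l)}$, and a bound showing that this residual decays like the square root of $e_{n-1}^{(l)}$ because the twisted shift $\phi^{(l)}$ is a Rayleigh-quotient-like \emph{lower} bound rather than the exact eigenvalue.

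First, from Algorithm~\ref{alg:dqds} one reads off
\[
\hat{e}_{n-1}^{(l)} \;=\; e_{n-1}^{(l)}\, \frac{q_n^{(l)}}{\hat{q}_{n-1}^{(l)}}, \qquad \hat{q}_n^{(l)}=d_n^{(l)},
\]
and by the non-restoring property, $\sigma_{\min}^2(B^{(l+1)}) = \sigma_{\min}^2(B^{(l)}) - s^{(l)}$. Standard arguments (as in Lemma~5.3 of~\cite{Aishima-Convergence} or Lemma~\ref{lem:prov2} in the Appendix) show that $\hat{q}_{n-1}^{(l)}$ is bounded and bounded away from zero, tending to the squared gap between the two smallest singular values of $B$, while $q_n^{(l)}$ tends to zero at the same rate as $\sigma_{\min}^2(B^{(l)})$. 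Combining these, one obtains an asymptotic recurrence of the form
\[
e_{n-1}^{(l+1)} \;=\; C_l\, e_{n-1}^{(l)} \bigl(\sigma_{\min}^2(B^{(l)}) - s^{(l)}\bigr),
\]
with $C_l$ bounded above by a constant independent of $l$, so the convergence order reduces to controlling how fast $\sigma_{\min}^2(B^{(l)}) - s^{(l)}$ shrinks relative to $e_{n-1}^{(l)}$.

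Second, I would bound the shift residual using the twisted factorization described in Section~\ref{sec:twist} (see also Section~6.3.3 of~\cite{Parlett-Imp} and the Appendix). Let $z^{(l)}$ be the approximate eigenvector extracted from the twisted triangular factors at the minimising index $dmink$. By construction $\phi^{(l)} = \mu^{(l)} - \|r^{(l)}\|$, where $\mu^{(l)} = z^{(l)T}BB^T z^{(l)}/\|z^{(l)}\|^2$ is the Rayleigh quotient and $r^{(l)} = (BB^T - \mu^{(l)}I)z^{(l)}/\|z^{(l)}\|$; the subtraction of $\|r^{(l)}\|$ is the safety margin needed to keep $\phi^{(l)} \le \sigma_{\min}^2$ so that positivity of the $\{q,e\}$ variables is preserved. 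This immediately gives $0 \le \sigma_{\min}^2 - \phi^{(l)} \le 2\|r^{(l)}\|$. A direct calculation expressing $\|r^{(l)}\|^2$ in the $\{q,e\}$ variables around position $dmink$, using the singleton structure in equation~\eqref{eq:singleton}, yields $\|r^{(l)}\|^2 = O(e_{n-1}^{(l)} q_n^{(l)})$ asymptotically, hence $\sigma_{\min}^2(B^{(l)}) - s^{(l)} = O\bigl(\sqrt{e_{n-1}^{(l)}}\cdot\sqrt{q_n^{(l)}}\bigr)$. Substituting into the recurrence gives $e_{n-1}^{(l+1)} = O\bigl((e_{n-1}^{(l)})^{3/2}\bigr)$, which is the claimed order of $1.5$.

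The main obstacles are threefold. The hardest step is the residual estimate $\|r^{(l)}\|^2 = O(e_{n-1}^{(l)} q_n^{(l)})$ when $dmink$ lies in the last $p$ rows but not exactly at $n$: one must propagate the singleton contribution through the full twisted factorization and carefully track cancellations, since any loose bound would degrade the exponent below $1.5$. A secondary issue is proving that $dmink$ eventually stabilises in a bottom window so that a single asymptotic regime applies; this can be handled by combining Corollary~\ref{cor:ming2} with monotonicity of the $\{e_k\}$ tail. Finally, one must verify that failures and fall-back shifts occur only finitely often in the asymptotic regime, so that the clean recurrence above governs all sufficiently large $l$ and the $1.5$ rate is attained.
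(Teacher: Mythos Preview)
Your overall strategy coincides with the paper's: both arguments track how the shift residual feeds back into the dqds update for $e_{n-1}$, and both exploit that the twisted shift is exactly $\phi=\mu-\|r\|$ with $\|r\|=\gamma_n\varphi/(1+\varphi^2)$ and $\varphi\sim\sqrt{e_{n-1}/q_{n-1}}$. The paper simply substitutes the explicit formula $s^{(l)}=q_n^{(l)}(1-\varphi^{(l)})/(1+(\varphi^{(l)})^2)$ into the identity $q_n^{(l+1)}=q_n^{(l)}-e_{n-1}^{(l+1)}-s^{(l)}$, then eliminates $q_n^{(l)}$ and $q_n^{(l+1)}$ via $e_{n-1}^{(l+1)}=e_{n-1}^{(l)}q_n^{(l)}/q_{n-1}^{(l+1)}$ to reach the two-step relation $e_{n-1}^{(l+2)}/(e_{n-1}^{(l+1)})^{3/2}=\eta^{(l)}\to 0$. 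Your Rayleigh-quotient/residual phrasing is the same computation in different clothing, not a genuinely different route.

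There is, however, an index slip in your first ingredient that makes the final substitution look cleaner than it really is. From the dqds loop one has $e_{n-1}^{(l+1)}=e_{n-1}^{(l)}\,q_n^{(l)}/q_{n-1}^{(l+1)}$, and $q_n^{(l)}\asymp\sigma_{\min}^2(B^{(l)})$, \emph{not} $\sigma_{\min}^2(B^{(l)})-s^{(l)}$; the latter equals $\sigma_{\min}^2(B^{(l+1)})\asymp q_n^{(l+1)}$. Thus your residual bound $\sigma_{\min}^2(B^{(l)})-s^{(l)}=O\bigl(\sqrt{e_{n-1}^{(l)}q_n^{(l)}}\bigr)$ controls $q_n^{(l+1)}$, not the $q_n^{(l)}$ sitting in the recurrence for $e_{n-1}^{(l+1)}$. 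Once this is corrected you are forced, exactly as in the paper, into a two-step estimate relating $e_{n-1}^{(l+2)}$ to $e_{n-1}^{(l+1)}$ and $e_{n-1}^{(l)}$; the one-line conclusion ``$e_{n-1}^{(l+1)}=O\bigl((e_{n-1}^{(l)})^{3/2}\bigr)$'' does not follow from what you wrote.

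Your three ``obstacles'' disappear in the asymptotic regime and the paper disposes of them up front: Lemma~\ref{lem:prov1} shows that for all large $l$ one has $dmink=n$ (so $\gamma_n^{(l)}=q_n^{(l)}$ and there is no need to propagate through a general twisted index) and $\varphi^{(l)}\to 0$ (so the twisted shift is always selected and never fails). No separate analysis of $dmink\neq n$ or of fall-back shifts is needed.
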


We leave the proof of Theorem~\ref{thm:mainconv} to the
Appendix.

\subsection{Using a suitable 2-by-2}
\label{sec:kahan}

Kahan \cite{Kahan-idea} suggests a better method to compute an upper
bound that requires little overhead. The idea is that the smallest
singular value of the submatrix around $d_{\min}$ must be a better
upper bound than $d_{\min}$ itself.

Let us consider the odq algorithm first.  Let $k$ be the index where
$\tilde{a}_k=\min_i(\tilde{a}_i)$, and assume $k>1$, see
equation~\eqref{eq:singleton}.  By the interlacing property of the
singular values~\cite{Golub-book2}, we know the smallest singular
value of any principle submatrix of $B^{(k)}$ is larger than
$\sigma_{\min}(B)$.  Thus the smallest singular value of
$\begin{bmatrix} \hat{a}_{k-1} & \hat{b}_{k-1} \\ & \tilde{a}_k
\end{bmatrix}$ is an upper bound on $\sigma_{\min}(B)$.  This claim is
also valid for the dqds algorithm.

\subsection{Our whole shift strategy}

In this subsection we summarize the structure of our shift strategy.
We modify the shifts of DLASQ~\cite{Parlett-Imp} in two places.
\begin{itemize}
\item The Case 4 and 5 in DLASQ use the twisted shift strategy when $d_{\min}$ is in the last $(p=)$ $2$ rows.
  We replace it by $p=20$.

\item We replace all $d_{\min}$ by \emph{sup}.
\end{itemize}

\fbox{
\begin{minipage}[t]{12cm}
\begin{algorithm}[Our shift strategy]
\begin{enumerate}[\hspace{0.5cm} (1)]
\item {\bf if} $d_{\min}<0$,
    \begin{itemize}
\item[] $s = 0$; \hspace{3.0cm} --Case 1
      \end{itemize}
\item {\bf if} no eigenvalue deflated
  \begin{itemize}
  \item {\bf if} ($d_{\min}$ in the last 20 rows)
    \begin{itemize}
    \item {\bf if} ($d_{\min}$ in the last two rows),
     \item[] \hspace{0.5cm} use the old shift strategy; \hspace{0.6cm} -- Case 2 and 3
    \item {\bf else}
    \item[] \hspace{0.5cm} use the \emph{twisted shift strategy}; \hspace{0.1cm} -- Case 4 and 5
    \item {\bf end if}
      \end{itemize}
  \item {\bf else}
    \begin{itemize}
    \item update $sup$ via the technique in section~\ref{sec:kahan};
    \item  use the old strategy and replace $d_{\min}$ by \emph{sup};   \hspace{0.6cm} -- Case 6
    \end{itemize}
    \end{itemize}
\item {\bf else}
    \begin{itemize}
  \item[] use the old shift strategy.
      \end{itemize}
\item {\bf end if}
\end{enumerate}
\end{algorithm}
\end{minipage} }
As in DLASQ, $d_{\min}<0$ is used to flag a new segment after splitting~\cite{Parlett-Imp}.
See section 6.3 in~\cite{Parlett-Imp} for the specific definitions of different cases.

\section{Finite step convergence property}
\label{sec:finiteCov}

In this section we justify our claim of linear worst case complexity
of our improved algorithm.
This property has allowed us to implement
our algorithm without fear of reaching an iteration limit
before convergence occurs.

Algorithm~\ref{alg:proto} shows the prototype of our algorithm.
The main differences with DLASQ are that Algorithm~\ref{alg:proto} updates
the upper bound $sup$ while DLASQ does not, and that
Algorithm~\ref{alg:proto} uses the d-deflation strategy while DLASQ does
not.  For simplicity, other techniques are not included in
Algorithm~\ref{alg:proto} such as \emph{splitting} and \emph{flipping},
see~\cite{Parlett-Imp} for details.
One step of dqds transform which either succeeds or fails is called
 one \emph{dqds iteration}, or just one \emph{iteration}.

 Our claim is as follows: For an $n \times n$ bidiagonal matrix $B$ with positive elements, Algorithm~\ref{alg:proto}
 can compute each singular value in about
 \begin{equation}
 \label{eq:max_it}
 \Upsilon=\lceil \log_{1/\beta}(n/\epsilon) \rceil
\end{equation}
iterations, where $\beta=\max(\alpha,1-\alpha)$
and $\epsilon$ is the machine precision, and in total it requires about $O(n\Upsilon)$
iterations to find all singular values.

We first do one dqd transform on matrix $B$ and then initialize $sup = d_{\min}$.
By Corollary~\ref{cor:ming} or~\ref{cor:ming2}, we know $sup
\le n \sigma_{\min}^2(B)$.
Denote the upper bound after the $k$th dqds iteration by $sup^{(k)}$.
Since $sup^{(k)}$ is used as a guide to choosing shift and
each iteration would make it
smaller at least by $(1-\beta) \cdotp sup^{(k)}$,
the upper bound after $k+1$ iterations satisfies
\[
sup^{(k+1)} \leq \beta sup^{(k)} \leq \beta^k n \sigma_{\min}^2(B),
\]
where $\beta=\max(\alpha,1-\alpha)$ and $\alpha$ is defined as in Algorithm~\ref{alg:proto}.

Thus, for
$k \geq \log_{1/\beta}(n/\epsilon)$,
we would have
\[
sup^{(k+1)} \le \beta^k n \sigma_{\min}^2(B) \le \epsilon \sigma_{\min}^2(B).
\]
If $k\ge \Upsilon$ and matrix $B$ is still not deflated by classical
strategies, by
Corollary~\ref{cor:ming} or~\ref{cor:ming2}, $B$ must be deflated
by the d-deflation strategy in section~\ref{sec:dmin}.
Therefore, Algorithm~\ref{alg:proto} requires no more than $\Upsilon$ iterations per singular value.

\fbox{
\begin{minipage}[t]{12cm}
\begin{algorithm}[Prototype of our algorithm]
\label{alg:proto}
\begin{description}
    \item \hspace{0.5cm} Choose a parameter $\alpha \in (0 \;\; 1)$;
    \item \hspace{0.5cm} {\bf while} Z unfinished do
    \item \hspace{1.0cm} (1) deflate array Z
    \item \hspace{1.5cm} {\bf if}  Z's final entries are negligible {\bf then}
    \item \hspace{2.0cm} reduce Z accordingly;
    \item \hspace{1.5cm} {\bf else}
    \item \hspace{2.0cm} {\bf if } $d_{\min}$ is small enough, use d-deflation strategy;
    \item \hspace{1.5cm}{\bf end if}
    \item \hspace{1.0cm} (2) choose a shift
    \item \hspace{1.5cm} {\bf if }$sup$ is small enough, {\bf then}
    \item \hspace{2.0cm} choose $s=0$;
    \item \hspace{1.5cm} {\bf else}
    \item \hspace{2.0cm} $s=\alpha\cdotp sup$;
    \item \hspace{1.5cm} {\bf end if}
    \item \hspace{1.0cm} (3) apply a dqds transform to Z
    \item \hspace{1.5cm} update $sup$;
    \item \hspace{1.5cm} {\bf if} dqds fails, {\bf go to} step (2);
    \item \hspace{0.5cm} {\bf end while}
\end{description}
\end{algorithm}
\end{minipage} }

If $\alpha = \frac{3}{4}$, $\epsilon = 10^{-16}$ and $n=1000$, the
algorithm above requires no more than $152$ iterations to compute each
singular value.
Equation~\eqref{eq:max_it} gives a guide to setting the average iteration number
required for all singular values.
It is set to 30 in DLASQ (LAPACK-3.4.0) which is too small,
and DLASQ may fail for the difficult matrices in section~\ref{sec:Num-dqds}.
Equation~\eqref{eq:max_it} suggests that a number around 100 is a better choice.
In the absolute majority of cases, our algorithm converges very fast, in the range of
about $10$ iterations per singular value.  However, there are some
bidiagonal matrices for which it indeed requires $O( \Upsilon)$ iterations to find
the smallest singular value, e.g. the difficult matrices in
section~\ref{sec:Num-dqds}. After the smallest singular
value is found convergence is very quick. 
The worst case iteration bound is very similar to that in the
zero-in algorithm and its recent variant for finding zeros of a
general univariate nonlinear equation (see~\cite{zeroin,Gu-zeroin}).


Just as in DLASQ~\cite{Parlett-Imp}, we in practice can gradually increase $\alpha$ if previous shifts
succeed and decrease $\alpha$ if previous shifts have failed continuously for three times.

\section{Numerical Results}
\label{sec:Num-dqds}

We have implemented our algorithm in Fortran 77 by incorporating our
new deflation and new shift strategies into the DLASQ routines.  All
experiments were performed on a laptop with 4G memory and Intel(R)
Core(TM) i7-2640M CPU.  For compilation we used the \texttt{gfortran}
compiler and the optimization flag \texttt{-O3}, and linked the codes
to optimized BLAS and LAPACK, which are obtained by using
ATLAS~\cite{Atlas}.

\subsection{Comparing each new technique} Recently, we have been able
to construct a number of bidiagonal matrices of different dimensions
for which DLASQ converges so slowly that it requires more iterations
than allowed in the code. In these matrices, the diagonals are highly
disordered and both the diagonal and off-diagonal entries are of
massively varying orders of magnitude.  Figure~\ref{fig:tmatrix} shows
the values of the diagonal elements of one such matrix, where the
x-axis denotes the index of the diagonal elements and the y-axis
denotes their values.  The plot of the first 800 diagonal elements
goes up and down, while the last 200 diagonal elements are nearly
equal to one.

In this section, we use these difficult matrices and a random
bidiagonal matrix whose entries are from Gaussian distribution to show
the improvements of our modifications.  There are five main
modifications over DLASQ.  Consequently we have five
versions of improved algorithms by adding the new techniques one by
one.  We use the following notation to denote them,
\begin{itemize}
\item V1: d-deflation strategy in section~\ref{sec:dmin};
\item V2: new deflation strategies in section~\ref{sec:newdefs};
\item V3: updating the upper bound;
\item V4: twisted shift in last 20 rows;
\item V5: using suitable 2-by-2 submatrix.
\end{itemize}

\begin{table}[ptb]
\caption{The difficult bidiagonal matrices}%
\label{tab:matrices}
\begin{center}%
\begin{tabular}
[c]{|cccc|}\hline
& $n$  & Description of the bidiagonal matrix $B$ & Source \\ \hline \hline
1 & 544 & Matrix\_1 &  \cite{MarquesWebsite} \\
2 & 1000 & Matrix\_2 & \cite{MarquesWebsite} \\
3 & 1087 & Matrix\_3 & \cite{MarquesWebsite} \\
4  & 1088 & Matrix\_4 & \cite{MarquesWebsite} \\
5 & 5000 & random bidiagonal matrix  &  \\ \hline
\end{tabular}
\end{center}
\end{table}

V$i$ denotes a version after adding a new technique to the previous
version.  Thus V5 is the best improved algorithm.  Note that the
technique \emph{setting negligible $d_{\min}$ to zero} has been
implemented in LAPACK-3.4.1\footnote{This technique is added to
LAPACK-3.4.0 by J. Demmel, W. Kahan and B. Lipshitz.}, and V1 performs
similarly to the DLASQ in LAPACK-3.4.1.  Table~\ref{tab:time-dqds}
shows the speedups of V5 over DLASQ in terms of time.  The average
iterations of V5 for each singular value are shown in
Table~\ref{tab:A2}.  From the tables we can see that V5 requires far
fewer iterations and is about 3x-10x faster than DLASQ for these
difficult matrices.


\begin{table}[ptb]%
\centering
\caption{The speedup of V$i$ over DLASQ \label{tab:time-dqds}}{%
\begin{tabular}{|c|c|cccccc|}
\hline
Matrix & $n$  &  DLASQ &    V1  &   V2   &   V3     &   V4   &   V5    \\ \hline \hline
1      & 544  & 1.00  &  1.62 &  1.75 &  2.14   &  2.10 &  2.83     \\
2      & 1000 & 1.00  &  1.76 &  1.76 &  1.80   &  2.06 &  2.74   \\
3      & 1087 & 1.00  &  4.98 &  5.20 &  7.88   &  7.88 &  9.99    \\
4      & 1088 & 1.00  &  1.79 &  2.14 &  2.41   &  2.52 &  3.31     \\
5      & 5000 & 1.00  &  1.04 &  1.04 &  1.07   &  1.14 &  1.20    \\
\hline
\end{tabular}}
\end{table}

\begin{table}[ptb]
\caption{The average iterations for each singular value}%
\label{tab:A2}
\begin{center}%
\begin{tabular}
[c]{|c|ccccc|}\hline
Matrix & 1  & 2 & 3 & 4 & 5  \\ \hline \hline
DLASQ  & 45.4 & 26.0 & 74.8 & 30.7 & 10.5  \\
V5 & 11.81 & 7.19 & 7.62 & 8.85 & 7.78 \\ \hline
\end{tabular}
\end{center}
\end{table}

This superior performance of V5 is largely due to the d-deflation
strategy in section~\ref{sec:dmin}, which is almost `tailor-made' for
such difficult matrices.  It can deflate $40\%$-$80\%$ singular values
of such matrices (see Figure~\ref{fig:percent}.)  It is also very
interesting to note that it usually can deflate about $20\%$ of
singular values of a general bidiagonal matrix.

For the Matrix\_3, Figure~\ref{fig:first500} shows the locations of
negligible $d_{\min}$ when finding the first 500 singular values,
which are all deflated by the d-deflation strategy.  For this matrix, V5
achieves about 10x speedup.  We compare the number of iterations when
finding the first 10 singular values of Matrix$\_3$.  The results are
shown in Figure~\ref{fig:first10it}.  For the first 10 singular
values, DLASQ takes 120 iterations on average.

The last four improvements aim to accelerate the convergence for
general bidiagonal matrices. From Table~\ref{tab:time-dqds}, we can
see that each of these techniques improves the performance.  Since
different matrices may have very different properties, we can not
expect these techniques to help a lot for all matrices.  We further
use all the matrices in {\tt stetester}~\cite{A880} to test each
improvement by adding them one by one.  Each technique is helpful for
most of these matrices. Due to space limitations, we do not include
all these results.

 \begin{figure}[ptbh]
 \centering
 \subfigure[The values of diagonal elements of Matrix\_3]{
 \includegraphics[totalheight=2.20in,width=2.45in]{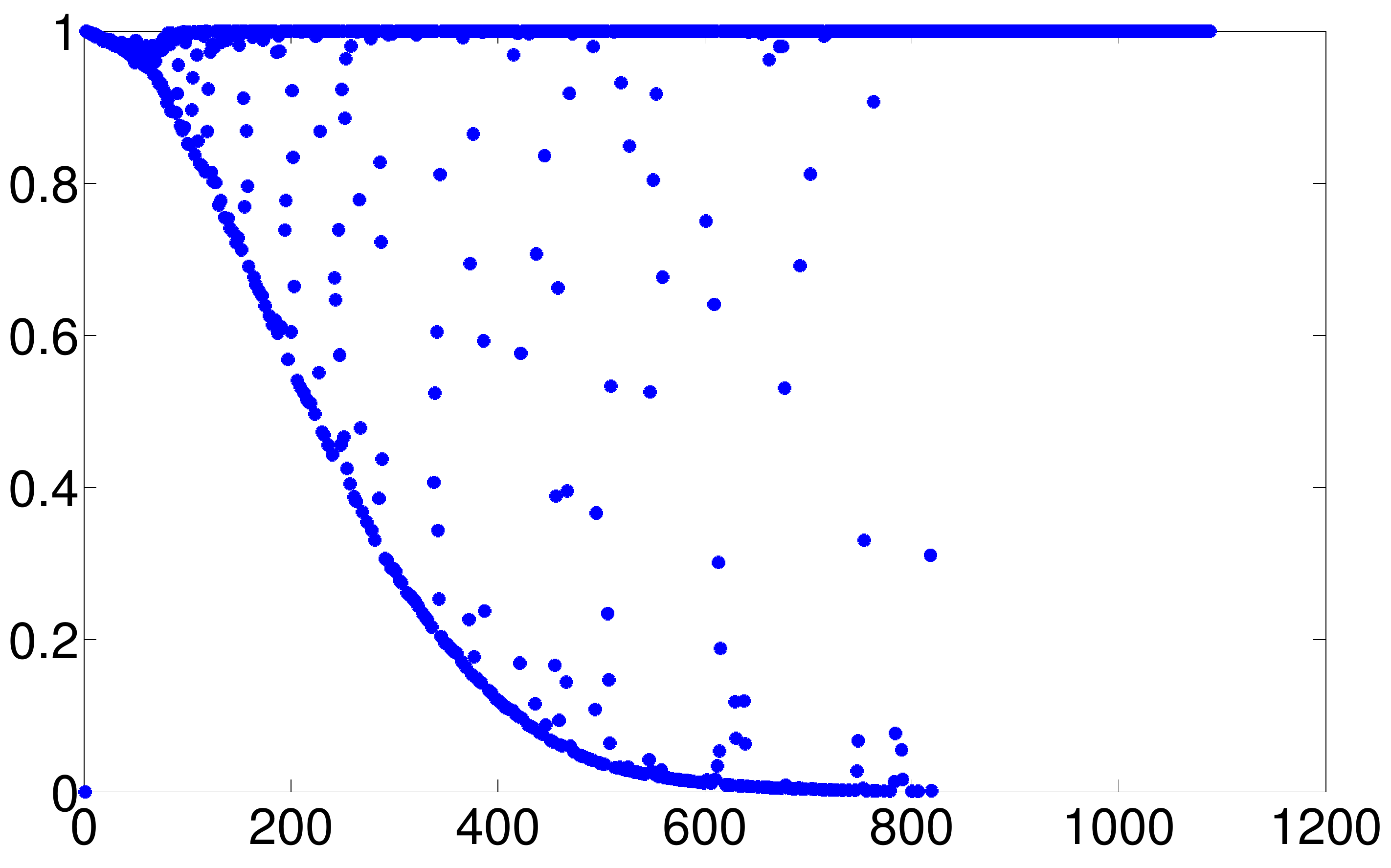}
 \label{fig:tmatrix}}
 \subfigure[The percentage of singular values deflated by d-deflation strategy in section~\ref{sec:dmin}]{
 \includegraphics[height=2.20in,width=2.45in]{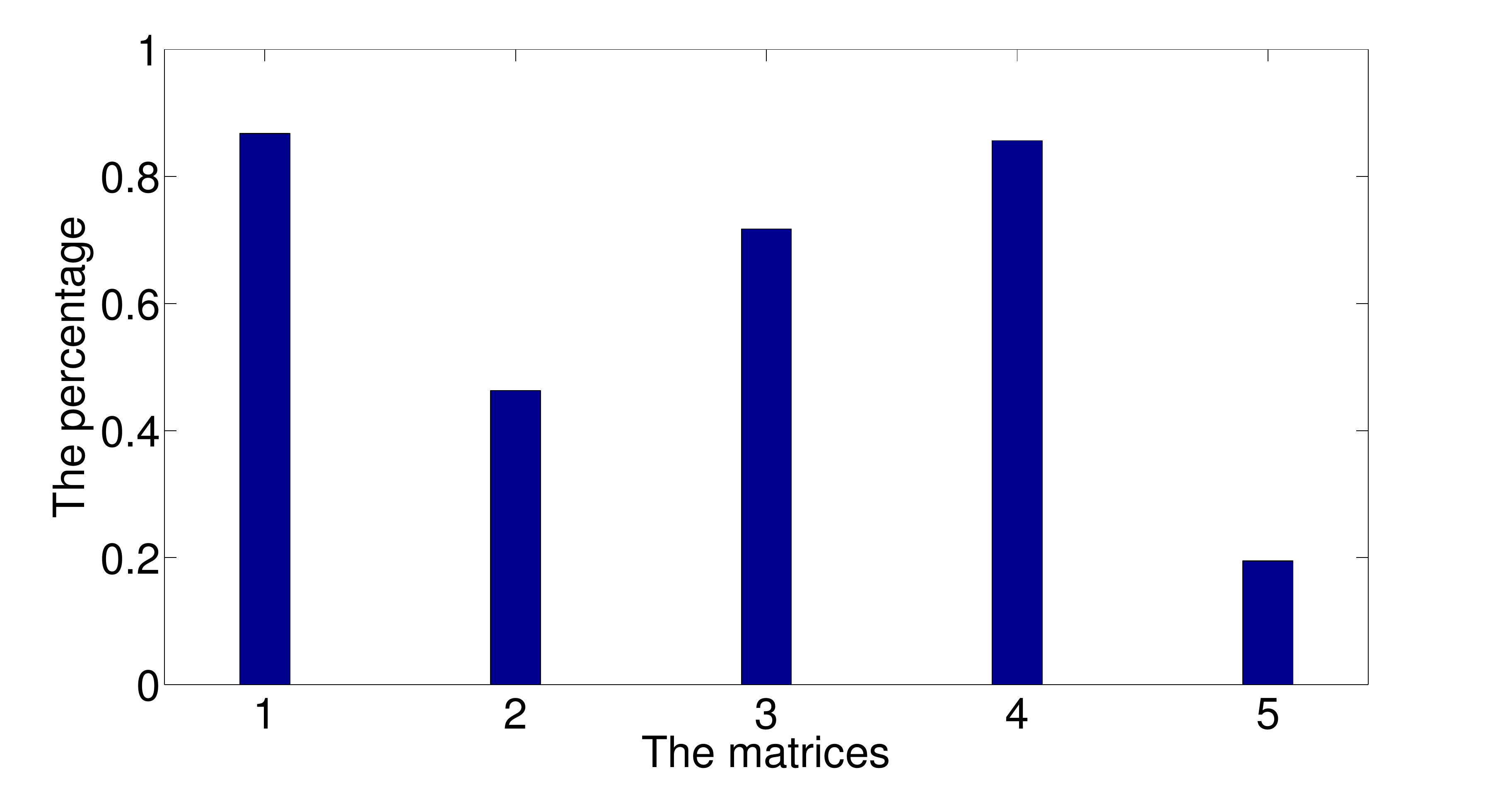}
 \label{fig:percent}}
 \caption{Results for these difficult matrices}
 \label{fig:tough}%
 \end{figure}

 \begin{figure}[ptbh]
 \centering
 \subfigure[The locations of $d_{\min}$ when finding the first 500 singular values]{
 \includegraphics[totalheight=2.0in,width=2.25in]{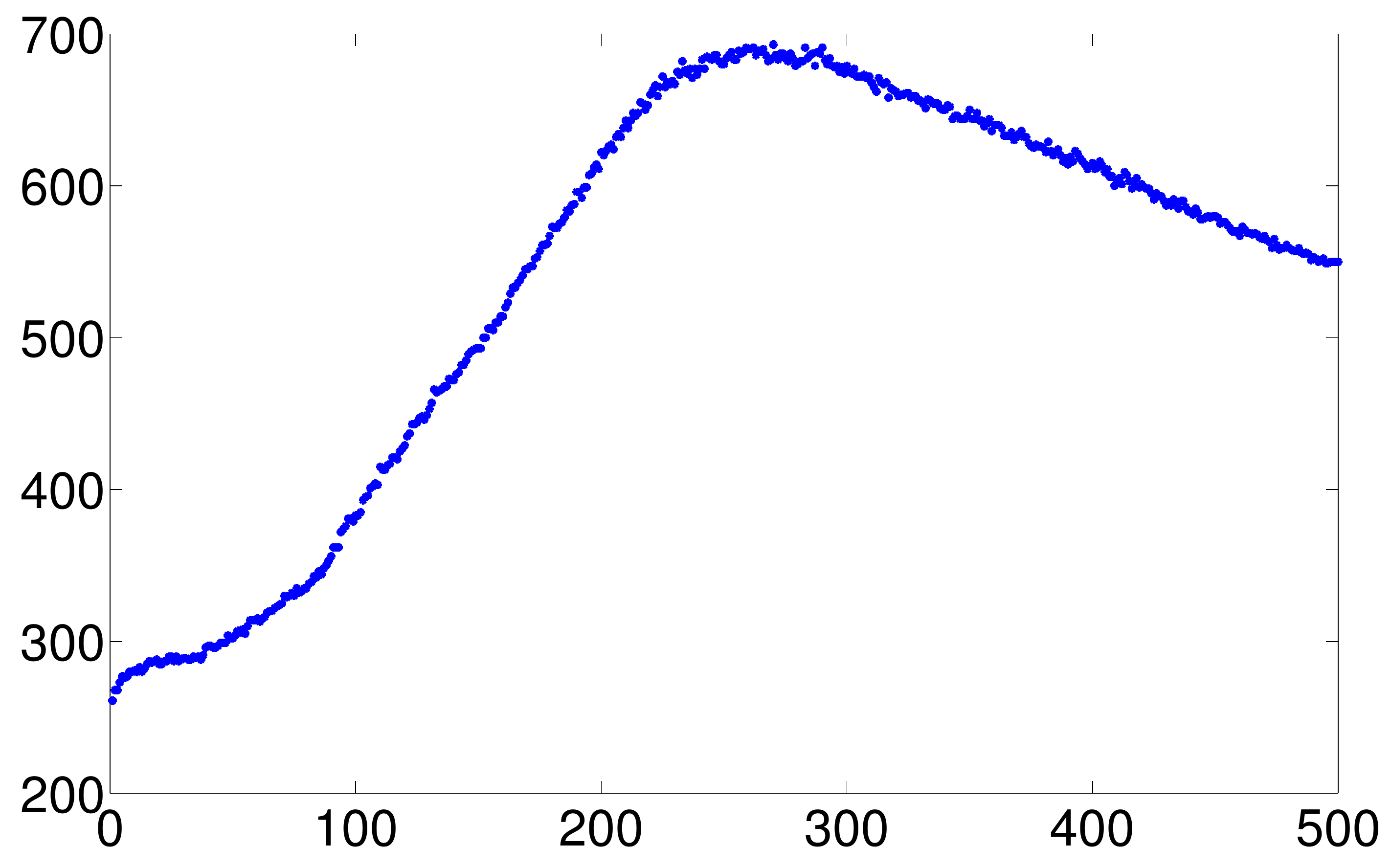}
 \label{fig:first500}}
 \subfigure[The number of iterations for the first 10 singular values]{
 \includegraphics[height=2.02in,width=2.40in]{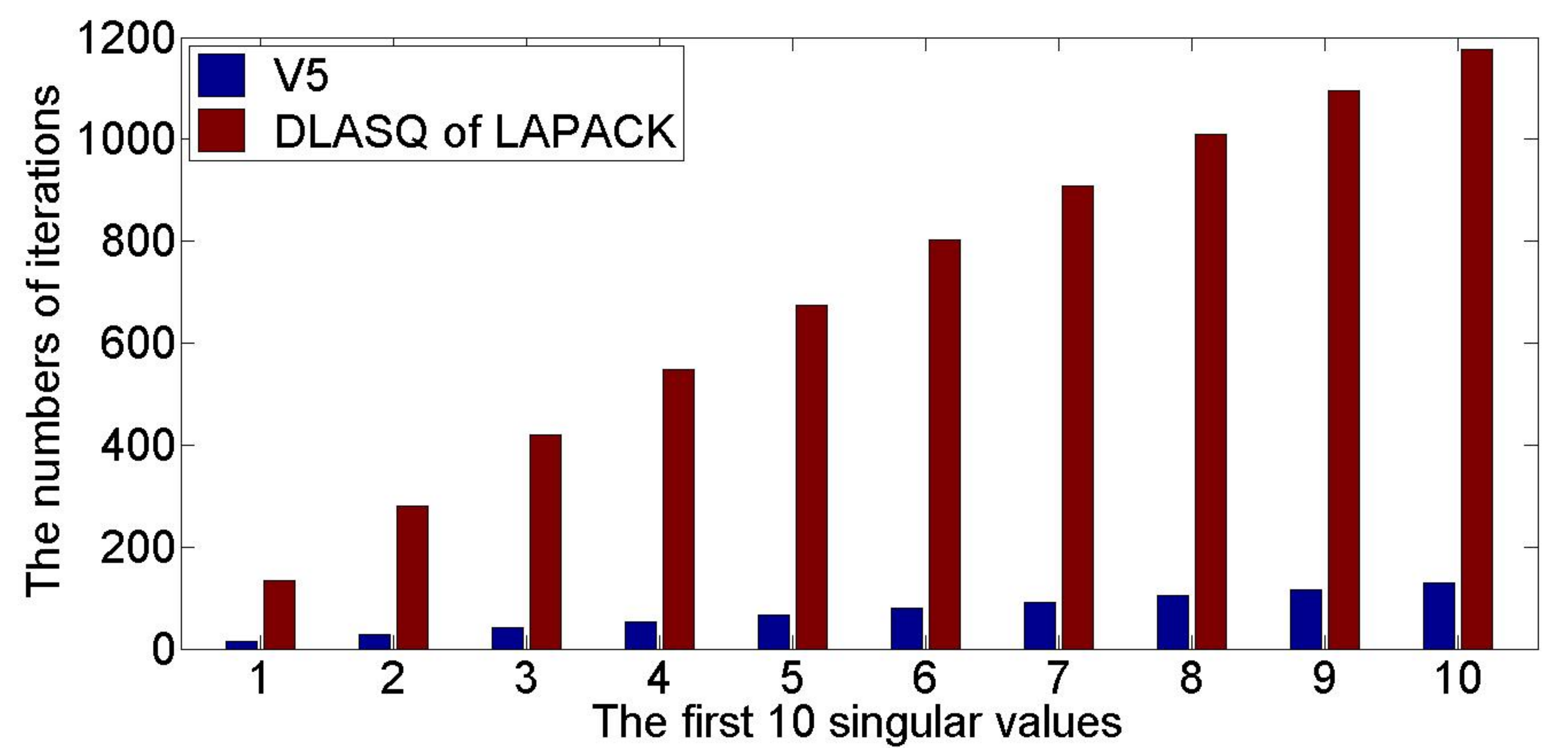}
 \label{fig:first10it}}
 \caption{More results for Matrix\_3 in Table~\ref{tab:matrices}}
 \label{fig:FirstIt}%
 \end{figure}

\subsection{Some more tests}

To further show the improvement of V5, we use some more
matrices to test our algorithm.  These matrices can be
divided into two classes: 1) matrices from applications; 2)
matrices constructed for testing the dqds algorithm.

These matrices are illustrated in Table~\ref{tab:matrices2}.  The last
four are from industrial applications which are collected in LAPACK
tester {\tt stetester} and can be obtained from a website maintained
by O.~Marques~\cite{MarquesWebsite}. To measure the computing time
accurately, we intentionally choose some big matrices.

The results are shown in Figure~\ref{fig:bar2} and
Figure~\ref{fig:bar3}.  Figure~\ref{fig:bar2} shows the ratios over
DLASQ in terms of time.  For these matrices from industrial
applications V5 saves about 30\% in time.  For these glued matrices
which are difficult for the dqds algorithm, V5 has even more speedups,
about 2x--5x times faster.  Figure~\ref{fig:bar3} provides more
information. For these matrices, d-deflation strategy deflates about
40\% of all singular values.  Figure~\ref{fig:bar3} further supports
our conclusion that the d-deflation strategy greatly improves the
performance of the dqds algorithm.

\begin{table}[ptb]
\caption{More bidiagonal matrices}%
\label{tab:matrices2}
\begin{center}%
\begin{tabular}
[c]{|cllc|}\hline
& $n$  & Description of the bidiagonal matrix $B$ & Source \\ \hline \hline
1 & 15,005 & Cholesky factor of Glued Wilkinson matrix & \cite{A880} \\
2 & 15,005 & Cholesky factor of Glued Clement matrix & \cite{A880} \\
3 & 30,010 & Cholesky factor of Glued Wilkinson matrix & \cite{A880} \\
4 & 30,010 & Cholesky factor of Glued Clement matrix & \cite{A880} \\ \hline \hline
5  & 2901  & Cholesky factor of T\_nasa2910\_1.dat & \cite{MarquesWebsite} \\
6 & 3258  & Cholesky factor of T\_bcsstkm10\_3.dat & \cite{MarquesWebsite} \\
7 & 4098  & Cholesky factor of T\_sts4098\_1.dat  & \cite{MarquesWebsite} \\
8 & 5472  & Cholesky factor of T\_nasa1824\_3.dat & \cite{MarquesWebsite} \\\hline
\end{tabular}
\end{center}
\end{table}

For these 254 matrices in {\tt stetester} for which DLASQ needs more
than $1.5e$-$2$ second, V5 usually saved about 20\% in time, and was
slower than DLASQ only for 32 of them but never slower than DLASQ by
more than $0.0145$ second.  Among these 254 matrices, V5 was 3x faster
for 4 of them; 2x faster for 14 of them; 1.5x faster for 47 of them;
1.2x faster for 174 of them.  The results are shown in
Figure~\ref{fig:M5}.

\begin{figure}[ptbh]
\centering
\subfigure[The time ratio of V5 over DLASQ ]{
\includegraphics[height=2.20in, width=2.35in]{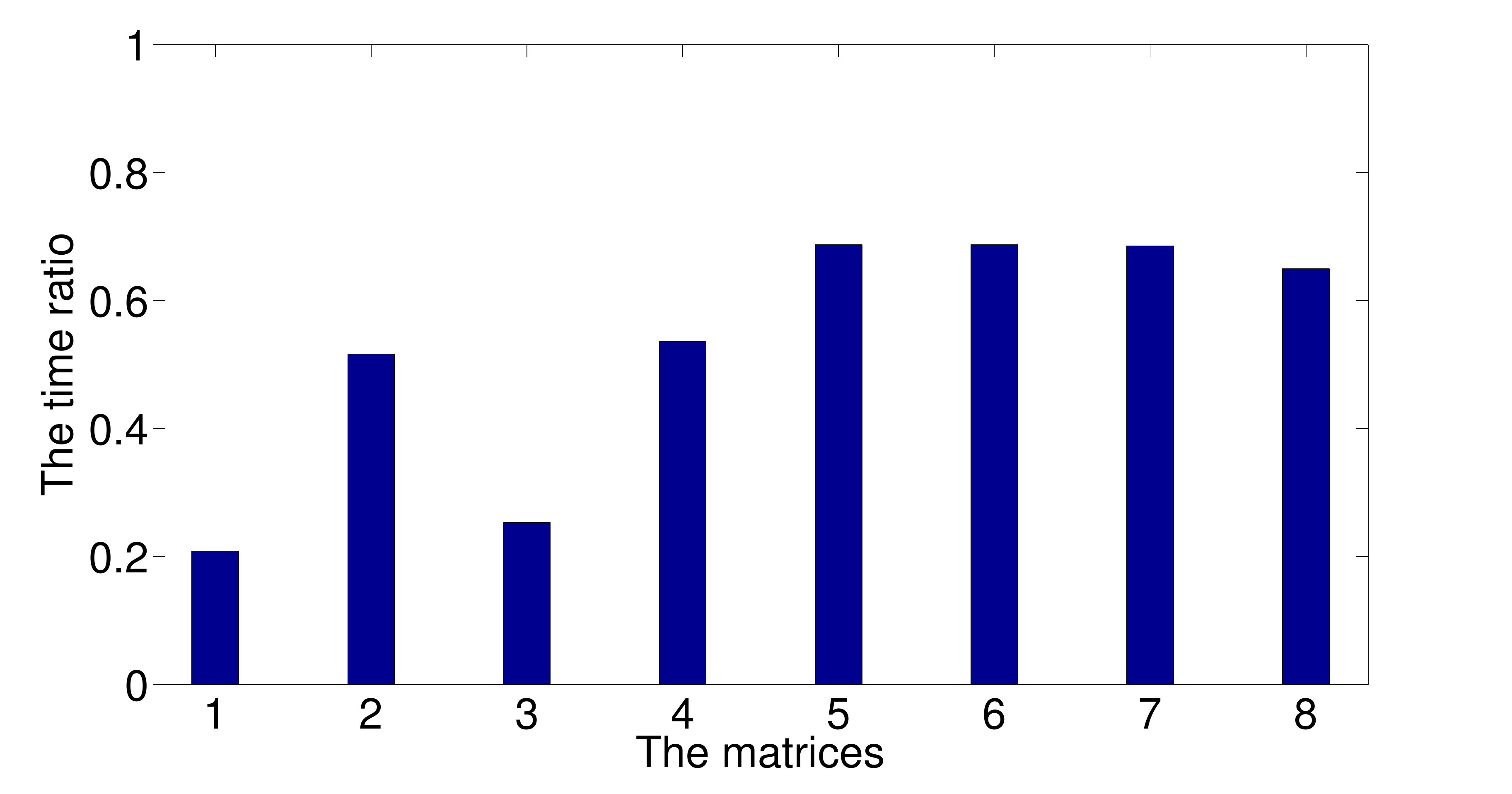}
\label{fig:bar2}}
\subfigure[The percentage of singular values deflated by d-deflation strategy in section~\ref{sec:dmin}]{
\includegraphics[height=2.20in,width=2.35in]{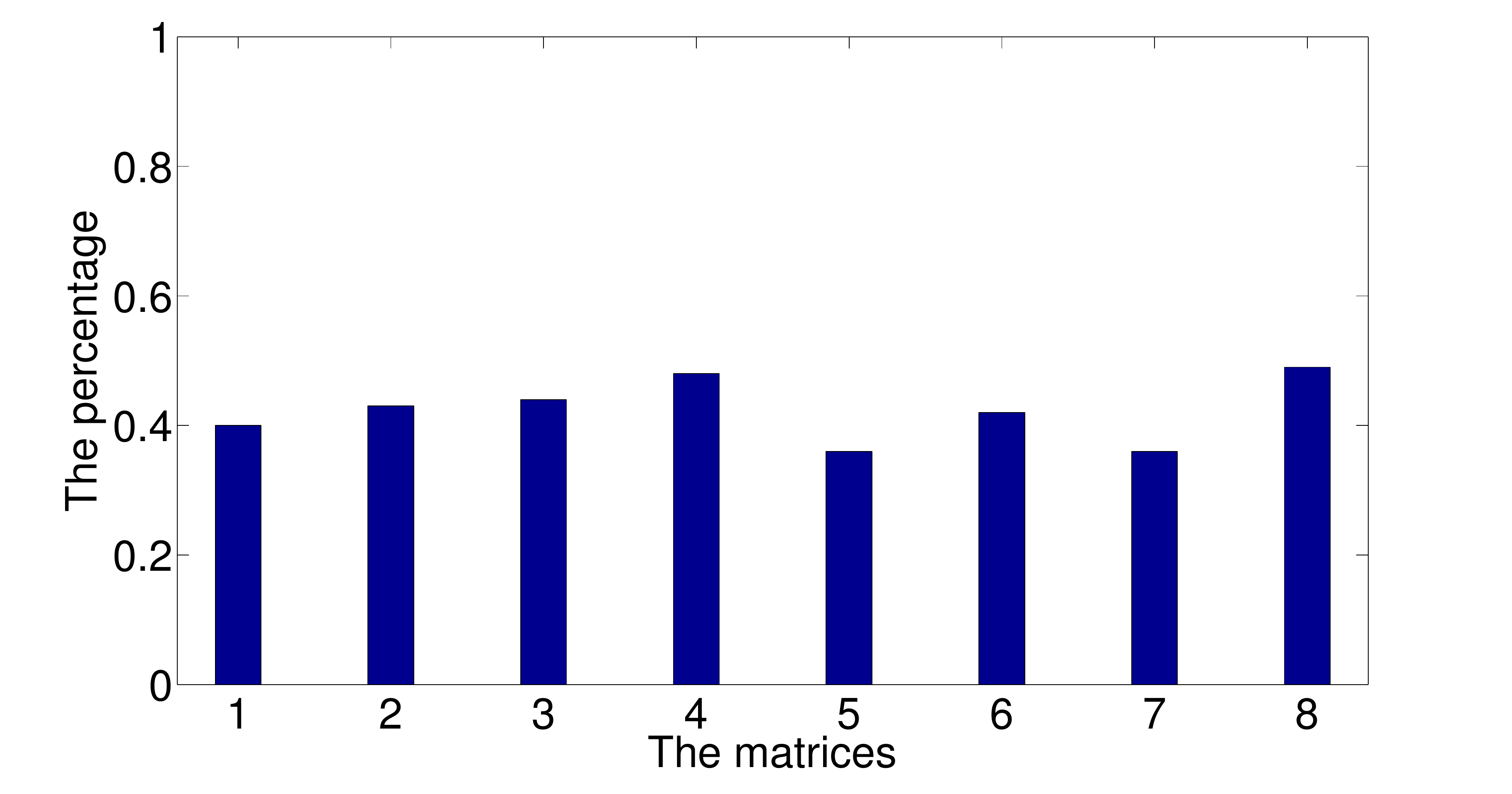}
\label{fig:bar3}}
\caption{More results for matrices from construction and industry}%
\label{fig:more}%
\end{figure}

\begin{figure}[ptbh]
\centering
\includegraphics[height=2.0in, width=4.0in]{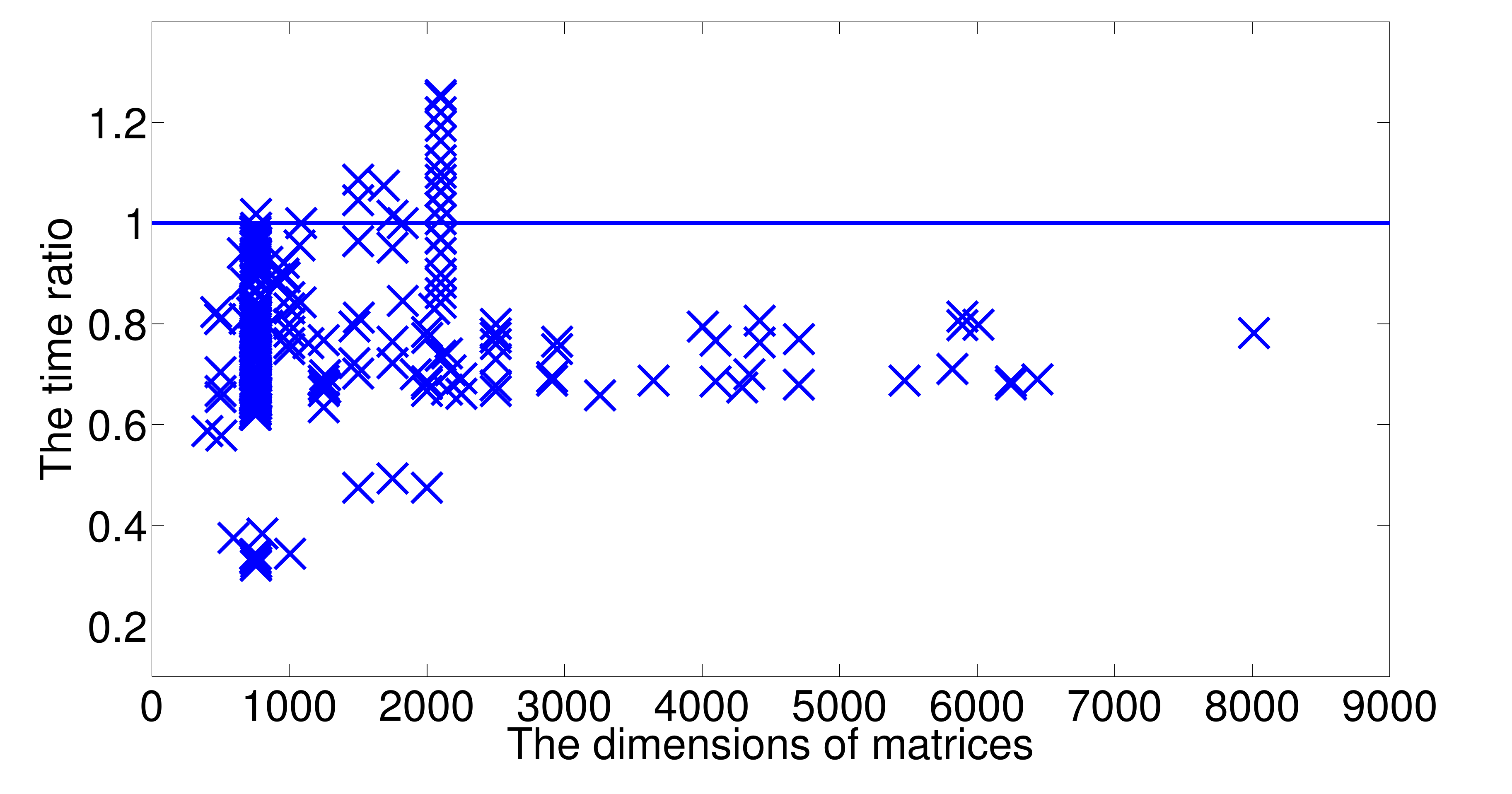}
\caption{The time of V5 vs DLASQ for matrices in LAPACK tester}%
\label{fig:M5}%
\end{figure}

\subsection{The test of accuracy}

To show the accuracy of the singular values computed by our algorithm,
we compared the singular values obtained by our algorithm with those
gotten from a bisection algorithm.  We first change the singular value
problem of a bidiagonal matrix into the eigenvalue problem of a
tridiagonal matrix of double size with zero diagonals, and then use a
bisection algorithm to find its eigenvalues.  We assume the
eigenvalues computed by the bisection algorithm are `correct'.  The
results of maximum relative error are shown in
Figure~\ref{fig:maxrelerr}.  Figure~\ref{fig:norm2err} shows the
2-norm of the relative errors.

\begin{figure}[ptbh]
\centering
\subfigure[Maximum relative error]{
\includegraphics[height=2.20in, width=2.45in]{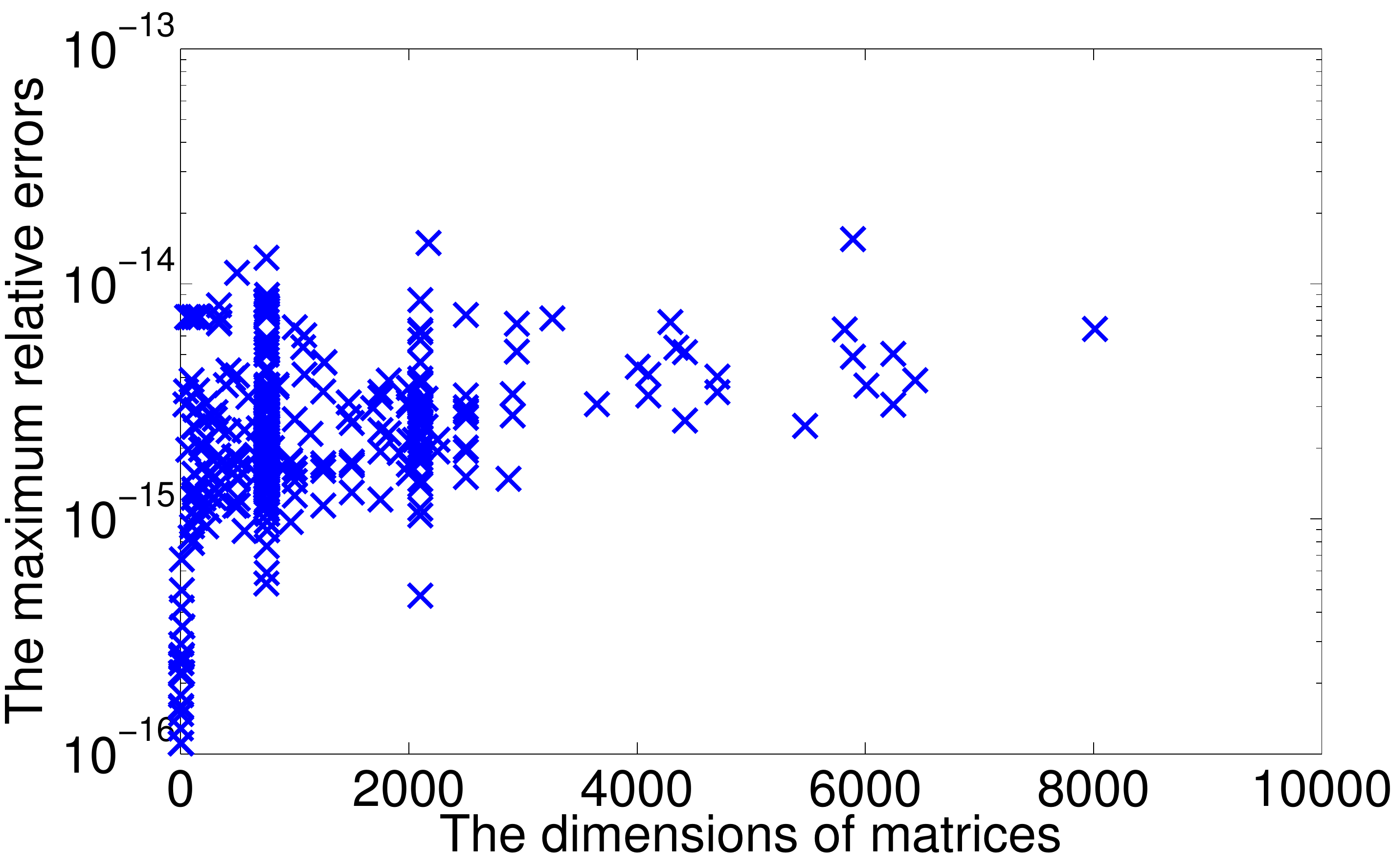}
\label{fig:maxrelerr}}
\subfigure[2-norm error]{
\includegraphics[height=2.20in,width=2.45in]{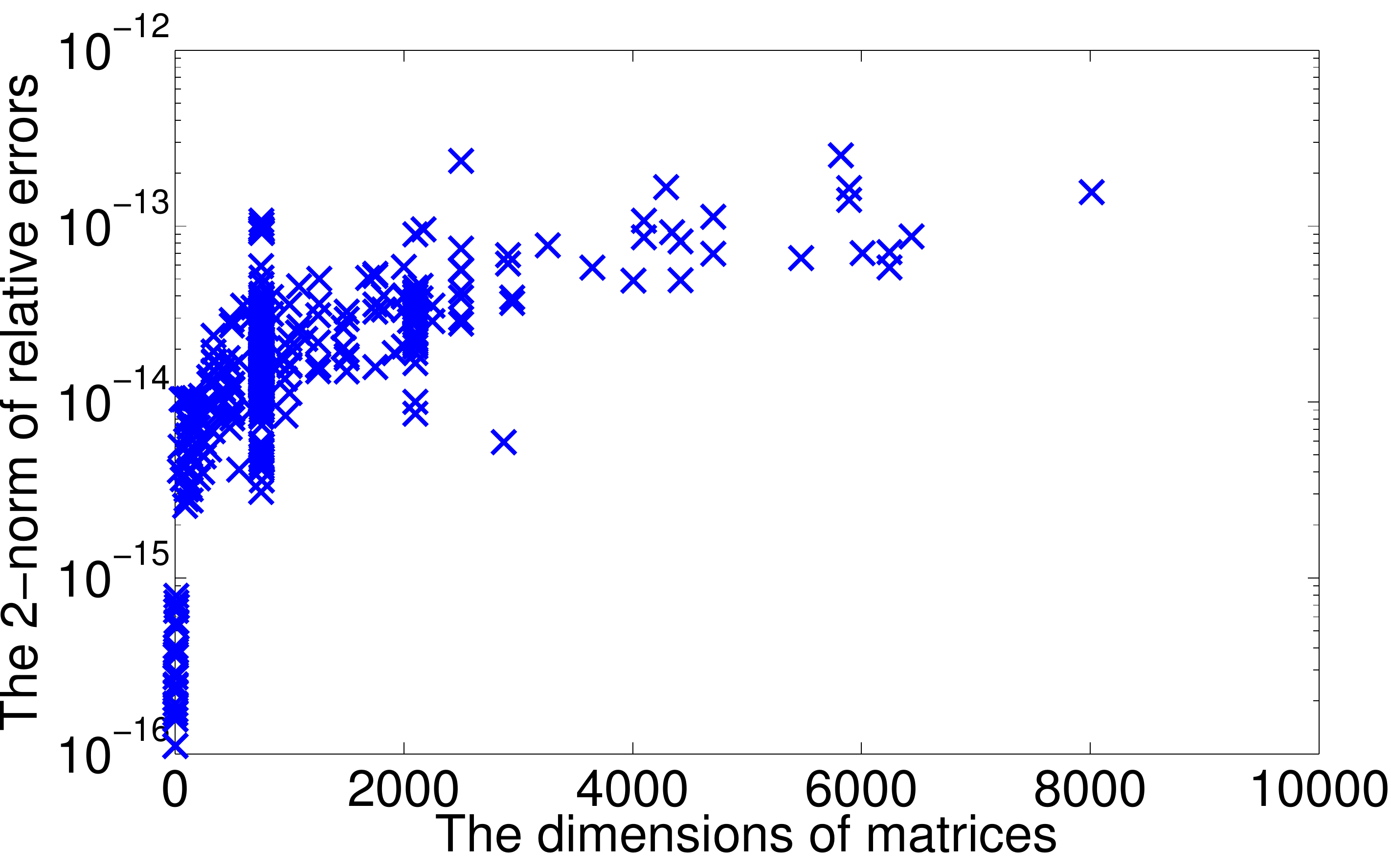}
\label{fig:norm2err}}
\caption{The comparison of accuracy with bisection}%
\label{fig:accuracy}%
\end{figure}




\subsection{Comparisons with aggressive early deflation}

The aggressive early deflation strategy~\cite{dqds-Agg} is designed to
enhance the dqds algorithm.  This deflation strategy can help a lot
for matrices which are easy for dqds.  In this subsection, we
demonstrate that AED does not help for those difficult matrices in
Table~\ref{tab:matrices}.  There are two versions of AED
in~\cite{dqds-Agg}, denoted by AggDef1 and AggDef2,
see~\cite{dqds-Agg} for details.  Since AggDef2 is more efficient than
AggDef1~\cite{dqds-Agg}, we only compare our algorithms with AggDef2.

When comparing the accuracy, we also assume the results by the
bisection method are correct.  From the results in
Table~\ref{tab:Aggdef}, we can see that for these difficult matrices
AggDef2 has nearly the same performance as DLASQ.  Our improved
algorithm (V5) is the most accurate and fastest among these three
versions.

\begin{table}[ptb]%
\caption{Comparison of V5 with AggDef2~\cite{dqds-Agg} for matrices in Table~\ref{tab:matrices}}
\label{tab:Aggdef}
\begin{center}%
\begin{tabular}{|c|c|ccccc|}
\hline
 &Methods&  Matrix\_1 &    Matrix\_2  &   Matrix\_3     &   Matrix\_4    &   Matrix\_5  \\ \hline \hline
$n$ &  & 544 & 1000  & 1087 & 1088 & 5000  \\ \hline \hline
Speedup    &   AggDef2  & 1.06  & 1.05 & 1.01   & 1.08   & 1.02   \\
$\frac{Time(DLASQ)}{Time(method)}$
       &   V5     & 2.83   & 2.74 & 9.99  & 3.31  & 1.20   \\ \hline \hline
\multirow{2}{*}{Max. Rel.}
       &    DLASQ    & 6.22e-15 &  9.54e-15 & 4.07e-14 & 2.42e-14  & 9.47e-15  \\
       &   AggDef2  & 6.66e-15 &  9.33e-15 & 3.97e-14 & 2.35e-14  & 9.52e-15  \\
Error  &   V5     & 3.66e-15 &  7.99e-15 & 3.85e-15 & 5.66e-15  & 6.27e-15  \\ \hline
\end{tabular}
\end{center}
\end{table}


By combining our improvements with AggDef2, we obtain a hybrid
algorithm, HDLASQ, which is summarized in Algorithm~\ref{alg:hdlasq},
similar to the algorithm in~\cite{dqds-Agg}.

\fbox{
\begin{minipage}[t]{12cm}
\begin{algorithm}[HDLASQ]
\label{alg:hdlasq}  \\
{\bf Inputs:} bidiagonal matrix $B\in R^{n\times n}$, deflation frequency $p$
\begin{enumerate}
\item {\bf while} size of current segment $B$ is larger than $\sqrt{n}$ {\bf do}
\item \hspace{0.3cm} run $p$ iterations of dqds by calling V5;
\item \hspace{0.3cm} perform aggressive early deflation;
\item {\bf end while}
\item run dqds until all singular values are computed.
\end{enumerate}
\end{algorithm}
\end{minipage} }

As mentioned in~\cite{dqds-Agg}, setting $p$ too large may
deteriorate the rate of convergence, but the performance depends not much
on $p$.  For example, HDLASQ with $p=200$ was about 10\% slower than
letting $p=20$ for matrices in Table~\ref{tab:easy} (except Mat1 for
which AggDef2 is particularly effective).  In our
implementation\footnote{We used the codes of AggDef,
available at: http://www.opt.mist.i.u-tokyo.ac.jp/\~{}nakatsukasa}, we let $p=50$.

For these disordered matrices in Table~\ref{tab:matrices}, HDLASQ has
nearly the same performance as V5.  For the matrices that are good
for AED (shown in Table~\ref{tab:easy}), the comparison results of
HDLASQ with other methods are shown in Table~\ref{tab:hdlasq}, from
which we can see that HDLASQ is faster than AggDef2, DLASQ and V5.
Furthermore, Figure~\ref{fig:hdlasq} shows the time ratio of HDLASQ
over DLASQ for matrices in {\tt stetester} for which DLASQ needs more
than $1.5e$-$2$ second.  By comparing Figure~\ref{fig:hdlasq} with
Figure~\ref{fig:M5}, we can also see that HDLASQ is usually better
than V5.

\begin{table}[ptb]
\caption{Some bidiagonal matrices for testing HDLASQ}%
\label{tab:easy}
\begin{center}%
\begin{tabular}
[c]{|cllc|}\hline
Matrix& $n$  & Description of the bidiagonal matrix $B$ & Source \\ \hline \hline
 Mat1 & 30000  & $\sqrt{q_i}=n+1-i,\sqrt{e_i}=1$ &  \cite{dqds-Agg} \\
 Mat2 & 30000  & $\sqrt{q_i}=n+1-i,\sqrt{e_i}=\sqrt{q_i}/5$ &  \cite{dqds-Agg} \\
 Mat3 & 30000 & Toeplitz: $\sqrt{q_i}=1$, $\sqrt{e_i}=2$ & \cite{A880,dqds-Agg} \\
 Mat4 & 30000 & Cholesky factor of tridiagonal (1,2,1) matrix & \cite{A880,Fernando94} \\  \hline
\end{tabular}
\end{center}
\end{table}

\begin{table}[ptb]%
\caption{The comparisons of HDLASQ with AggDef2 and V5 for matrices in Table~\ref{tab:easy}}
\label{tab:hdlasq}
\begin{center}
\begin{tabular}{|c|c|cccc|}
\hline
& Methods &  Mat1 &  Mat2  &   Mat3  &  Mat4    \\ \hline \hline
$n$ & & 30000 &  30000 &  30000  & 30000  \\ \hline \hline
Speedup  &   V5   & 1.66   & 1.48  & 1.39  & 1.40  \\
\multirow{2}{*}{$\frac{Time(DLASQ)}{Time(method)}$}
&   AggDef2  & 76.1  & 1.99   & 1.57   & 1.59   \\
&   HDLASQ  & 79.5  & 2.18   & 1.67   & 1.69    \\ \hline
\end{tabular}
\end{center}
\end{table}

\begin{figure}[ptbh]
\centering
\includegraphics[height=2.10in, width=4.0in]{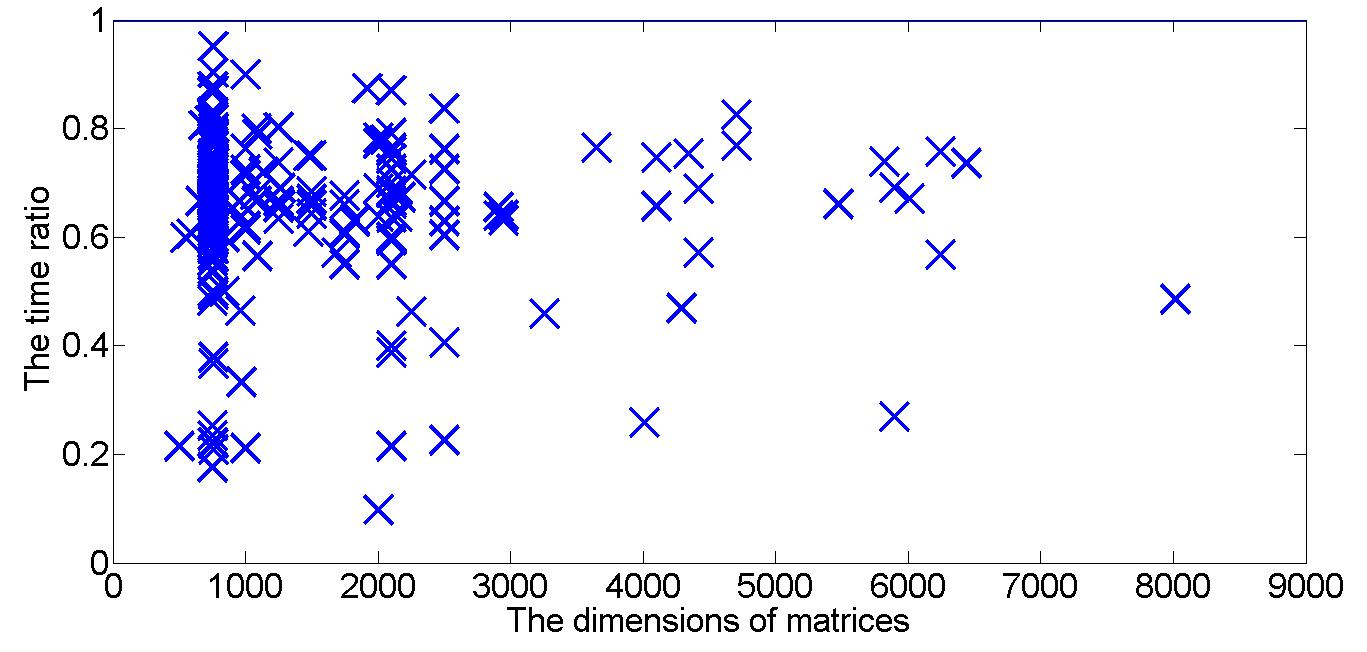}
\caption{The time of HDLASQ vs DLASQ for matrices in LAPACK tester}
\label{fig:hdlasq}%
\end{figure}

\section{Conclusions}
\label{sec:conclusion}
In this paper we first propose a novel deflation strategy for the dqds
algorithm, named \emph{d-deflation strategy}, which is different from
the classic deflation strategies and greatly improves the dqds
algorithm.  Note that the shifts for the dqds algorithm must be a
fraction of the upper bound $sup$.  Together with the technique of
\emph{updating the upper bound}, they ensure the linear worst case
complexity of our improved dqds algorithm V5.  Additional modifications
to certain shift strategies are also included. These improvements
together make V5 up to 10x faster for those difficult matrices and
1.2x-4x faster in general without any loss of accuracy. By combining
V5 with AED, we propose a hybrid algorithm (HDLASQ) which is shown to
be faster than DLASQ, V5 and AggDef2.

\section*{Acknowledgement}
The authors are very grateful to the anonymous referees and
the associated editor Chen Greif for their valuable suggestions, and would like to
acknowledge many helpful discussions with J. Demmel,
W. Kahan, and B. Lipshitz. The authors also thank O. Marques for
making his collection of difficult bidiagonal matrices
available to us for numerical experiments.


\appendix
\section*{}
In this Appendix, we prove Theorem~\ref{thm:mainconv}.  Our techniques
are similar to those used in~\cite{Aishima-Convergence} to prove the
order of convergence of the Johnson shift.
We assume that matrix $B$ is defined as in~\eqref{eq:B} and that its entries
satisfy $a_k=\sqrt{q_k}=\sqrt{q_k^{(0)}}$, $b_k=\sqrt{e_k}=\sqrt{e_k^{(0)}}$.
In the following context, let $\{q_k^{(l)},e_k^{(l)}\}$ denote the array
after $l$ dqds transforms.

We first introduce the reverse stationary differential qd algorithm with shifts
(\emph{dstqds}), which starts from the bottom $q_n$, illustrated as
follows~\cite{dqds-Agg}.
Let $\{\overset{o}{q},\overset{o}{e}\}$ denote the array after
doing one dstqds transform on the $\{q,e\}$ array.

\fbox{
\begin{minipage}[t]{12cm}
\begin{algorithm}[reverse dstqds]
\label{alg:dstqds2}
\begin{description}
\item \hspace{0.5cm} $t_n = -s$
\item \hspace{0.5cm} {\bf for} $i=n-1,n-2,\ldots,1$
\item \hspace{1.0cm} $\overset{o}{q}_{i+1} = q_{i+1} +t_{i+1} $
\item \hspace{1.0cm} $tmp= \frac{e_{i}}{\overset{o}{q}_{i+1}} $
\item \hspace{1.0cm} $ \overset{o}{e}_i = q_{i+1}* tmp$
\item \hspace{1.0cm} $ t_{i} = t_{i+1} *tmp - s $
\item \hspace{0.5cm} {\bf end for}
\item \hspace{0.5cm} $\overset{o}{q}_1 = q_1 +t_1$
\end{description}
\end{algorithm}
\end{minipage} }

For the $\{q, e\}$ array, if we do the dqds transformation starting
from $q_1$ and dstqds starting from $q_n$, there is enough
information to form the twisted factorization at any $k$ we
choose (see~\cite{Dhillon-RelGap} for details):
\begin{equation}
\label{eq:twistN}
BB^T-sI=\mathcal{N}\mathcal{N}^T,
\end{equation}
where
\[
\mathcal{N}^T = \begin{bmatrix}
\sqrt{q_1^{(1)}} & \sqrt{e_1^{(1)}} &&&&&& \\
& \cdotp & \cdotp &&&& \\
&0 & \sqrt{q_{k-1}^{(1)}} & \sqrt{e_{k-1}^{(1)}} &&&& \\
&&0 & \sqrt{\gamma_k}  &&&& \\
&&& \sqrt{\overset{o}{e}_{k}} & \sqrt{\overset{o}{q}_{k+1}} & 0 &&\\
&&&& \cdotp & \cdotp &  \\
&&&&& \sqrt{\overset{o}{e}_{n-1}} & \sqrt{\overset{o}{q}_{n}}
 \end{bmatrix},
\]
 and $\gamma_k=
d_k+t_{k+1}\frac{e_k}{\overset{o}{q}_{k+1}}$.

Recall that ${\bf e}_k$ denotes the $k$-th column of an $n\times n$
identity matrix (to distinguish from $e_k$).  The solution to
$\mathcal{N}\mathcal{N}^T z^{(1)} = \gamma_k {\bf e}_k$ can be computed by
\begin{equation}
\label{eq:Z}
z^{(1)}(j) = \left\{
\begin{array}{ll} 1 & \mbox{if} \quad j = k , \cr
-z^{(1)}(j+1)\sqrt{e_j^{(1)}/q_j^{(1)}}, &
  \mbox{if} \quad j < k, \\
 -z^{(1)}(j-1)
  \sqrt{\overset{o}{e}_{j-1}/\overset{o}{q}_{j}}, &
  \mbox{if} \quad j > k.\end{array}\right.
\end{equation}
Denote $(\varphi^{(1)})^2 =\|z^{(1)}\|^2-1$.  By Theorem 4.5.1~\cite{Parlett-book}
(or equation (12) in~\cite{Parlett-Imp}), the smallest eigenvalue of
$BB^T-sI$ is bounded below by
\begin{equation}
\label{eq:bound1}
\phi= \gamma_k \frac{1-\varphi^{(1)}}{1+(\varphi^{(1)})^2},
\end{equation}
which is used as the shift for the next dqds transform in DLASQ if
$\varphi^{(1)}$ is small (for example, less than $\frac{3}{4}$).  The
equation~\eqref{eq:Z} only needs to be solved approximately, see
section 6.3.3 of~\cite{Parlett-Imp} for details.

Below we show that for sufficiently large $l$ the shifts are chosen by
the twisted shift strategy, and then prove its order of
convergence. We introduce a result from~\cite{Aishima-Convergence},
which states the convergence of the dqds algorithm.
\begin{lemma}[Convergence of the dqds algorithm \cite{Aishima-Convergence}]
\label{thm:converg}
Suppose the matrix $B$ defined as in \eqref{eq:B} has positive nonzero elements,
and the shift in the dqds algorithm is taken so that $0 \leq s^{(l)} < (\sigma_{\min}^{(l)})^2$ holds. Then
\begin{equation}
\label{eq:sn}
\sum_{l=0}^{\infty} s^{(l)} \leq \sigma_n^2.
\end{equation}
Moreover,
\begin{eqnarray}
& \lim_{l \rightarrow \infty}& e_k^{(l)}  =  0 \quad (k=1,2,\cdots,n-1), \label{eq:ek}\\
&\lim_{l \rightarrow \infty}& \frac{e_k^{(l+1)}}{e_k^{(l)}}  =  \rho_k, \quad (k=1,\cdots,n-1),
\label{eq:snd}\\
&\lim_{l \rightarrow \infty}& q_k^{(l)} = \sigma_k^2 -\sum_{l=0}^{\infty} s^{(l)} \quad (k=1,2,\cdots,n), \label{eq:qk}
\end{eqnarray}
where $\rho_k = \frac{\sigma_{k+1}^2 - \sum_{l=0}^{\infty}s^{(l)}}{\sigma_{k}^2 - \sum_{l=0}^{\infty}s^{(l)}}$.
\end{lemma}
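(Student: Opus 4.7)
The argument hinges on the similarity $L^{(l)} U^{(l)} \sim B^T B - S^{(l)} I$, where $S^{(l)} := \sum_{j=0}^{l-1} s^{(j)}$, combined with a compactness/fixed-point analysis of the dqds iteration viewed as a continuous map on the bounded region of positive $\{q, e\}$ arrays. I would organize the proof in four stages.

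\emph{Stage 1: proving \eqref{eq:sn}.} After $l$ dqds steps, $L^{(l)}U^{(l)}$ has eigenvalues $\sigma_i^2 - S^{(l)}$, and the shift hypothesis $s^{(l)} < (\sigma_{\min}^{(l)})^2 = \sigma_n^2 - S^{(l)}$ forces $S^{(l+1)} < \sigma_n^2$. Hence $\{S^{(l)}\}$ is monotone nondecreasing and bounded above by $\sigma_n^2$, so it converges to some $S_\infty \leq \sigma_n^2$; in particular $s^{(l)} \to 0$.

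\emph{Stage 2: boundedness.} Matching traces across the similarity yields
\[
\sum_{k=1}^{n} q_k^{(l)} + \sum_{k=1}^{n-1} e_k^{(l)} \;=\; \operatorname{tr}(B^T B) - n\, S^{(l)},
\]
and, since every summand is positive, each individual $q_k^{(l)}$ and $e_k^{(l)}$ is uniformly bounded. Thus $\{(q^{(l)}, e^{(l)})\}$ lies in a compact set and every subsequence has a convergent subsubsequence.

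\emph{Stage 3: proving \eqref{eq:ek}.} Fix any subsequential limit $(q^{*}, e^{*})$. Passing to a further subsequence so that $(q^{(l_j+1)}, e^{(l_j+1)})$ also converges, continuity of the dqds transform in $(q, e, s)$ together with $s^{(l_j)} \to 0$ shows the new limit equals $T_0(q^{*}, e^{*})$, where $T_0$ denotes the unshifted dqd map. Iterating, the entire forward orbit $\{T_0^m(q^{*}, e^{*})\}_{m\ge 0}$ lies in the compact, $T_0$-invariant $\omega$-limit set. Reading Algorithm~\ref{alg:dqds} with $s=0$ at a fixed point: $\hat{q}_1 = d_1 + e_1 = q_1 + e_1 = q_1$ forces $e_1 = 0$, and induction on $k$ (using $d_{k+1} = d_k q_{k+1}/\hat q_k$ and $\hat q_{k+1} = q_{k+1}$) yields $e_k = 0$ for all $k$. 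Combining this with the classical Rutishauser result that bounded unshifted-qd orbits converge to such fixed points, every subsequential limit satisfies $e^{*} = 0$; hence $e_k^{(l)} \to 0$ for each $k$.

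\emph{Stage 4: proving \eqref{eq:qk} and the rate \eqref{eq:snd}.} Since $B^T B$ is irreducible symmetric tridiagonal its eigenvalues are simple, so any limit $(q^{*}, 0)$ satisfies $\{q_k^{*}\} = \{\sigma_i^2 - S_\infty\}$ as multisets. To fix the permutation, I would invoke the Cholesky-LR interpretation: dqds applied to $\{q,e\}$ is Cholesky-LR on the positive definite tridiagonal $B^T B$ shifted by $S^{(l)}$, and the classical LR convergence theorem (via the power-iteration action on the nested leading invariant subspaces, i.e.\ on the leading principal minors $\prod_{j\le k} q_j^{(l)}$) forces the diagonal entries to emerge in decreasing order, establishing $q_k^{(l)} \to \sigma_k^2 - S_\infty$. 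Finally, the exact identity $\hat e_k^{(l)} \hat q_k^{(l)} = e_k^{(l)} q_{k+1}^{(l)}$ (read off the $(k+1, k)$ entry of $\hat L \hat U = UL - sI$), together with $\hat q_k^{(l)} \to \sigma_k^2 - S_\infty$ and $q_{k+1}^{(l)} \to \sigma_{k+1}^2 - S_\infty$, gives $e_k^{(l+1)}/e_k^{(l)} \to \rho_k$.

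\emph{Main obstacle.} The most delicate step is Stage~4: pinning down the ordering of the limit diagonal. Compactness and the fixed-point characterization alone determine $(q_1^{*}, \dots, q_n^{*})$ only up to a permutation of the shifted eigenvalues. Ruling out the ``wrong'' orderings requires either the power-iteration/LR dominance argument on nested leading subspaces, or a careful inductive analysis tracking $q_{k+1}^{(l)}/q_k^{(l)}$ as in~\cite{Aishima-Convergence}. A secondary technicality is controlling the denominators $\hat q_k^{(l)}$ along subsequences so that continuity of $T_0$ can actually be applied, but this follows from the strict positivity $\sigma_n^2 - S^{(l)} > 0$ combined with the trace bound of Stage~2.
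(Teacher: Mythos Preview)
The paper does not prove this lemma; it is quoted verbatim from \cite{Aishima-Convergence} and used as a black box in the appendix. So there is no in-paper argument to compare your proposal against.

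On the substance of your sketch: Stages~1, 2, and the rate computation in Stage~4 are correct and standard (the identity $\hat e_k \hat q_k = e_k q_{k+1}$ you quote is exactly the one that drives $e_k^{(l+1)}/e_k^{(l)}\to\rho_k$). Stage~3, however, has a genuine gap. From $T_0$-invariance of the $\omega$-limit set $\Omega$ together with Rutishauser's theorem you can only conclude that the forward $T_0$-orbit of any $p\in\Omega$ converges to a fixed point \emph{inside} $\Omega$; it does not follow that $p$ itself is a fixed point. An $\omega$-limit set can perfectly well contain a non-fixed point $p$ and the entire tail $\{T_0^m(p)\}$ and its limit. To close this you need an additional monotone (Lyapunov) quantity that is respected, at least asymptotically, by the shifted maps $T_{s^{(l)}}$ as $s^{(l)}\to 0$, or you must bypass compactness altogether and run the LR/power-iteration argument you already invoke in Stage~4 to get $e_k^{(l)}\to 0$ directly. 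The proof in \cite{Aishima-Convergence} takes the latter, more computational, route: it tracks the products $\prod_{j\le k} q_j^{(l)}$ via the accumulated factorization of $\prod_l (B^TB - S^{(l)} I)$ and obtains both $e_k^{(l)}\to 0$ and the correct ordering in one stroke, which is why you found the ordering step the hardest --- in that approach it is not a separate obstacle.

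A second, smaller issue: your ``secondary technicality'' about positive denominators is not handled by the trace bound plus $\sigma_n^2-S^{(l)}>0$ alone; you need a uniform lower bound on each $q_k^{(l)}$ (for $k<n$) to keep $T_0$ continuous at the limit, and that comes from the leading-minor identities, not from the trace.
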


\begin{remark} Since $\sigma_{k+1}^2<\sigma_k^2$
and $\rho_k = \frac{\sigma_{k+1}^2
  -\sum_{l=0}^{\infty}s^{(l)}}{\sigma_{k}^2 -
  \sum_{l=0}^{\infty}s^{(l)}}$, we have $\rho_k < 1$ and
therefore ${e_k^{(l+1)}} < {e_k^{(l)}}$ for all large enough
$l$.
\end{remark}

Based on Lemma \ref{thm:converg}, we can prove the following lemma.
\begin{lemma}
\label{lem:prov1}
Under the same assumptions as in Lemma \ref{thm:converg},
for all sufficiently large $l$,
the shifts will be chosen by the twisted shift strategy,
\begin{equation}
\label{eq:prov1}
s^{(l)} = \gamma_n^{(l)} \frac{1-\varphi^{(l)}}{1+(\varphi^{(l)})^2},
\end{equation}
where $\gamma_n^{(l)} = d_{\min}^{(l)} = q_n^{(l)}$ and $\varphi^{(l)}$ can
be computed by~\eqref{eq:Z}.
\end{lemma}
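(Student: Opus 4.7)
The plan is to combine the convergence result of Lemma~\ref{thm:converg} with the logic of the twisted shift selection to establish all three identities. First I would observe that, by~\eqref{eq:ek} and~\eqref{eq:qk}, the limiting values $\tilde\sigma_k^2 := \sigma_k^2 - \sum_{j=0}^{\infty} s^{(j)}$ satisfy $\tilde\sigma_1^2 > \tilde\sigma_2^2 > \cdots > \tilde\sigma_n^2 \geq 0$, with a strictly positive gap separating $\tilde\sigma_n^2$ from every other $\tilde\sigma_k^2$ since $B$ has positive entries (so the singular values of $B$ are distinct). Since the intermediate $d_k$ quantities produced by one dqds step satisfy $\hat q_k = d_k + e_k$ and $d_{k+1} = d_k(q_{k+1}/\hat q_k) - s$, and since $s^{(l)} \to 0$ and $e_k^{(l)} \to 0$, a routine induction on $k$ yields $d_k^{(l)} \to \tilde\sigma_k^2$ for every $k$. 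Consequently, for all $l$ exceeding some $l_0$, the smallest $d_k^{(l)}$ is attained at $k = n$; that is, $d_{\min}^{(l)} = d_n^{(l)} = q_n^{(l)}$, which in particular sits within the last twenty rows.

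Next I would analyze the twisted factorization at index $k = n$. With $k = n$, the reverse dstqds sweep is vacuous, and the matrix $\mathcal{N}^T$ in~\eqref{eq:twistN} collapses to the ordinary upper bidiagonal forward factor produced by the dqds transform, so that $\gamma_n^{(l)}$ coincides with the bottom forward quantity $d_n^{(l)} = q_n^{(l)}$. Specialising~\eqref{eq:Z} to $k = n$ yields $z^{(l)}(j) = (-1)^{n-j} \prod_{i=j}^{n-1} \sqrt{e_i^{(l)}/q_i^{(l)}}$ for $j < n$, so
\[
(\varphi^{(l)})^2 \;=\; \|z^{(l)}\|^2 - 1 \;=\; \sum_{j=1}^{n-1}\;\prod_{i=j}^{n-1} \frac{e_i^{(l)}}{q_i^{(l)}}.
\]
Each factor $e_i^{(l)}/q_i^{(l)}$ tends to $0$ because $e_i^{(l)} \to 0$ while $q_i^{(l)} \to \tilde\sigma_i^2 > 0$ for $i < n$. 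Hence $\varphi^{(l)} \to 0$, and in particular $\varphi^{(l)} < \tfrac{3}{4}$ for all $l$ large enough, which is exactly the acceptance threshold used by DLASQ and by our variant for actually committing to the twisted shift.

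Finally I would verify that the algorithm's branching logic routes the shift choice to the twisted rule for large $l$: $d_{\min}^{(l)} = q_n^{(l)} > 0$ (so we are not in the post-split Case~1), no classical deflation has occurred in the preceding iteration (since $e_{n-1}^{(l)} \to 0$ geometrically with ratio $\rho_{n-1} < 1$ by~\eqref{eq:snd} but not yet below the negligibility threshold), and $d_{\min}^{(l)}$ lies strictly inside the last twenty rows. Thus Cases~4/5 of our shift strategy are engaged and formula~\eqref{eq:bound1} is applied with $\gamma_k = \gamma_n^{(l)} = q_n^{(l)}$, producing exactly~\eqref{eq:prov1}. The main obstacle I anticipate is precisely this routing check: one must argue carefully that, once the asymptotic regime for the $\{d_k^{(l)}\}$ kicks in, the ordering ``$\min$ occurs at position $n$'' is preserved across iterations until the last singular value is found, and that no spurious deflation at the top intervenes. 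A monotonicity argument grounded in~\eqref{eq:snd}, combined with the fact that the $d_k$ for $k<n$ stay uniformly bounded below by $\tfrac{1}{2}(\tilde\sigma_{n-1}^2 - \tilde\sigma_n^2)$ for $l \geq l_0$, should close this gap.
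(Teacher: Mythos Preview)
Your proposal is correct and follows essentially the same route as the paper: use Lemma~\ref{thm:converg} to show that $e_k^{(l)}\to 0$ and $q_k^{(l)}\to\tilde\sigma_k^2$, deduce that $d_{\min}^{(l)}$ eventually sits at position $n$ (so $\gamma_n^{(l)}=q_n^{(l)}$), and then compute $(\varphi^{(l)})^2$ from~\eqref{eq:Z} at $k=n$ as the sum/nested product $\sum_{j}\prod_{i=j}^{n-1}e_i^{(l)}/q_i^{(l)}\to 0$, forcing the twisted shift~\eqref{eq:prov1} to be selected. The only difference is that you spend effort on the algorithm's branching/routing logic, whereas the paper simply asserts the conclusion once $\varphi^{(l)}\to 0$ and, in the subsequent Remark, explicitly \emph{assumes} for the convergence analysis that $d_{\min}$ moves to the bottom; so your extra routing discussion is more than the paper actually proves here and can be safely trimmed.
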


\begin{proof}
By~\cite{Fernando94} the smallest singular value of $B$ in
\eqref{eq:B} satisfies $\sigma_n > 0$.
By equations~\eqref{eq:ek} and~\eqref{eq:qk}, we know that $e_k^{(l)} \rightarrow 0$, $q_k^{(l)}\geq \sigma_k^2-\sigma_n^2 >0$
(as $l \rightarrow \infty$ for $k=1,2,\cdots,n-1$)
and that $\{q_k^{(l)}\}$ gradually become monotonic with respect to $k$.
For sufficiently large $l$ we will have dmink$=n$.
By \eqref{eq:Z}, we get
\begin{equation}
\label{eq:Z2}
\begin{split}
z^{(l)}(n) & = 1, \\
z^{(l)}(j) & = -z^{(l)}(j+1)\sqrt{{e}_j^{(l)}/{q}_j^{(l)}}, j < n.
\end{split}
\end{equation}
Let $(z^{(l)})^T = ((x^{(l)})^T \quad 1 )$ and then, after
some algebraic manipulations,
\begin{equation}
  \label{eq:x}
(\varphi^{(l)})^2=\|x^{(l)}\|^2=\frac{e_{n-1}^{(l)}}{q_{n-1}^{(l)}}(1+\frac{e_{n-2}^{(l)}}{q_{n-2}^{(l)}}(1+\frac{e_{n-3}^{(l)}}{q_{n-3}^{(l)}}(1+\cdots))).
\end{equation}
By equations~\eqref{eq:ek} and~\eqref{eq:qk},
we know that $e_{k}^{(l)}\rightarrow 0$, $q_k^{(l)}$ converges to a positive
constant as $l \rightarrow \infty$
(for $1\le k \le n-1$) and that
$\varphi^{(l)}$ also converges to zero.
%
Therefore, the shifts would be chosen by~\eqref{eq:prov1} for all sufficiently large
$l$.
\end{proof}

\begin{remark}
If $\varphi^{(l)}\le \frac{3}{4}$, DLASQ will use~\eqref{eq:prov1} as a shift.
To prove the main conclusion of this appendix, we assume that
$d_{\min}$ always moves to the bottom.  The aim is analyzing
the asymptotic convergence rate of the twisted shift strategy.
The following lemma reveals another good property of the twisted
shift.  It is said that $d_{\min}=q_n^{(l)}$ always converges
to zero if using the twisted shift.
\end{remark}
\begin{lemma}
\label{lem:prov2}
Under the same assumptions as in Lemma \ref{thm:converg}, we have
\[
\sum_{l=0}^{\infty} s^{(l)} = \sigma_n^2,
\]
\[
\lim_{l\rightarrow \infty} q_k^{(l)} = \sigma_k^2 - \sigma_n^2 \quad  (k=1,\cdots,n-1);
\lim_{l\rightarrow \infty} q_n^{(l)} = 0.
\]
\end{lemma}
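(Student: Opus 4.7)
My plan is to derive the three limits from Lemma~\ref{thm:converg} together with Lemma~\ref{lem:prov1} via a simple contradiction argument. Lemma~\ref{thm:converg} already gives $S_\infty := \sum_{l=0}^\infty s^{(l)} \le \sigma_n^2$ and $\lim_{l\to\infty} q_k^{(l)} = \sigma_k^2 - S_\infty$ for every $k$. Consequently, once I prove $S_\infty = \sigma_n^2$, the statements $\lim q_n^{(l)} = 0$ and $\lim q_k^{(l)} = \sigma_k^2 - \sigma_n^2$ for $k < n$ follow immediately from \eqref{eq:qk}, so the entire lemma reduces to verifying the single identity $S_\infty = \sigma_n^2$.

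To show $S_\infty = \sigma_n^2$, I would argue by contradiction. Suppose instead that $S_\infty < \sigma_n^2$, and set $\delta := \sigma_n^2 - S_\infty > 0$. Then by \eqref{eq:qk} we would have $\lim_{l\to\infty} q_n^{(l)} = \delta > 0$, and by \eqref{eq:ek} together with \eqref{eq:qk} applied to $k < n$, the quantity $\varphi^{(l)}$ defined in \eqref{eq:x} would tend to $0$, since each factor $e_j^{(l)}/q_j^{(l)}$ in the continued-fraction expression for $(\varphi^{(l)})^2$ tends to $0$ while $q_j^{(l)}$ stays bounded away from $0$ for $j < n$.

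By Lemma~\ref{lem:prov1}, for all sufficiently large $l$ the twisted shift strategy is active, i.e.
\[
s^{(l)} \;=\; \gamma_n^{(l)}\,\frac{1-\varphi^{(l)}}{1+(\varphi^{(l)})^2} \;=\; q_n^{(l)}\,\frac{1-\varphi^{(l)}}{1+(\varphi^{(l)})^2}.
\]
Letting $l \to \infty$ and using $q_n^{(l)} \to \delta$ and $\varphi^{(l)} \to 0$, this forces $s^{(l)} \to \delta > 0$. But then the terms of the series $\sum_{l=0}^\infty s^{(l)}$ do not tend to zero, so the series diverges, contradicting the bound $S_\infty \le \sigma_n^2 < \infty$ from Lemma~\ref{thm:converg}. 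Hence $\delta = 0$, i.e. $S_\infty = \sigma_n^2$, which completes the proof.

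The only subtle step is the claim that $\varphi^{(l)} \to 0$, which I would justify carefully by truncating the continued fraction \eqref{eq:x} and bounding each ratio $e_j^{(l)}/q_j^{(l)}$ using \eqref{eq:ek} and the fact that $q_j^{(l)}$ is bounded below by a positive constant for $j \le n-1$ (guaranteed by $\sigma_j^2 - S_\infty \ge \sigma_j^2 - \sigma_n^2 > 0$). Beyond this, the argument is purely a limit computation; the structural content all comes from Lemmas \ref{thm:converg} and \ref{lem:prov1}.
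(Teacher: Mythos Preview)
Your argument is correct and follows essentially the same route as the paper: use $\varphi^{(l)}\to 0$ together with the twisted-shift formula from Lemma~\ref{lem:prov1} to conclude that $s^{(l)}$ and $q_n^{(l)}$ share the same limit, and then invoke convergence of $\sum_l s^{(l)}$ from~\eqref{eq:sn} to force that common limit to be zero, after which~\eqref{eq:qk} gives the rest. The only difference is packaging: the paper argues directly (series converges $\Rightarrow$ $s^{(l)}\to 0$ $\Rightarrow$ $q_n^{(l)}\to 0$), whereas you wrap the same implication in a contradiction and re-derive $\varphi^{(l)}\to 0$ instead of citing it from the proof of Lemma~\ref{lem:prov1}; both detours are harmless but unnecessary.
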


\begin{proof} From Lemma \ref{lem:prov1}, we know $\|x^{(l)}\|
\rightarrow 0$ and $$ \lim_{l\rightarrow \infty} s^{(l)} =
\lim_{l\rightarrow \infty} q_n^{(l)} \geq 0. $$

Furthermore, since $\lim_{l\rightarrow \infty} s^{(l)} = 0$ from
equation \eqref{eq:sn}, we have
$\lim_{l\rightarrow \infty} q_n^{(l)}=0.$ The first two equations
follow from $\lim_{l\rightarrow \infty} q_n^{(l)}=0$ and
equation~\eqref{eq:qk}.
\end{proof}

We are now ready to prove {\bf Theorem~\ref{thm:mainconv}}.
\begin{proof}
By Algorithm \ref{alg:dqds}, we have
\[
\begin{split}
q_n^{(l+1)} &= d_{n-1}^{(l+1)}\frac{q_n^{(l)}}{q_{n-1}^{(l+1)}}-s^{(l)}
    = q_n^{(l)}-e_{n-1}^{(l)}\frac{q_n^{(l)}}{q_{n-1}^{(l+1)}}-s^{(l)} \\
    &= q_n^{(l)}-e_{n-1}^{(l+1)}-s^{(l)}.
\end{split}
\]
By Lemma \ref{lem:prov1} the shift is chosen by the twisted shift strategy for
sufficiently large $l$, and we have
\[
\begin{split}
q_n^{(l+1)} &= q_n^{(l)}-e_{n-1}^{(l+1)}-q_n^{(l)}\frac{1-\varphi^{(l)}}{1+(\varphi^{(l)})^2} \\
    & = q_n^{(l)}\frac{(\varphi^{(l)})^2+\varphi^{(l)}}{1+(\varphi^{(l)})^2}-e_{n-1}^{(l+1)},
\end{split}
\]
where
$
\varphi^{(l)}=\|x^{(l)}\| 
    = \sqrt{\frac{e_{n-1}^{(l)}}{q_{n-1}^{(l)}}}\cdotp \sqrt{1+\xi^{(l)}},
$
and $\xi^{(l)}\to 0$ as $l \rightarrow \infty$ (by equation~\eqref{eq:x}).

Hence
\begin{equation}\label{qn}
q_n^{(l+1)} = q_n^{(l)} \sqrt{e_{n-1}^{(l)}}
\frac{\left(1+\|x^{(l)}\|\right)\sqrt{1+\xi^{(l)}}}{\left(1+\|x^{(l)}\|^2\right)
\sqrt{q_{n-1}^{(l)}}}-e_{n-1}^{(l+1)}.
\end{equation}
On the other hand, from the fourth line of Algorithm~\ref{alg:dqds} we also have
\[ e_{n-1}^{(l+2)} = \frac{q_{n}^{(l+1)}
  e_{n-1}^{(l+1)}}{q_{n-1}^{(l+2)}}, \]
which implies
\[ q_{n}^{(l+1)}  =   \frac{e_{n-1}^{(l+2)} q_{n-1}^{(l+2)}}{
  e_{n-1}^{(l+1)}} \quad \mbox{and} \quad q_{n}^{(l)}  =
  \frac{e_{n-1}^{(l+1)} q_{n-1}^{(l+1)}}{ e_{n-1}^{(l)}} .\]
Plugging these equations into~(\ref{qn}), we obtain
\begin{equation}\label{Eqn:new}
{\displaystyle \frac{e_{n-1}^{(l+2)} q_{n-1}^{(l+2)}}{
  e_{n-1}^{(l+1)}} =   \frac{e_{n-1}^{(l+1)}
  q_{n-1}^{(l+1)}}{  \sqrt{e_{n-1}^{(l)}}}
\frac{\left(1+\|x^{(l)}\|\right)\sqrt{1+\xi^{(l)}}}{\left(1+\|x^{(l)}\|^2\right)
\sqrt{q_{n-1}^{(l)}}}-e_{n-1}^{(l+1)},}
\end{equation}
which can be rewritten as
\[{\displaystyle
\frac{e_{n-1}^{(l+2)} }{(e_{n-1}^{(l+1)})^{3/2}} = \eta^{(l)}, }  \]
with 
\[{\displaystyle
\eta^{(l)} = \frac{\sqrt{q_{n-1}^{(l+1)}}}{q_{n-1}^{(l+2)}} \left(
\frac{\sqrt{q_{n}^{(l)}}\left(1+\|x^{(l)}\|\right)\sqrt{1+\xi^{(l)}}}{\left(1+\|x^{(l)}\|^2\right)\sqrt{q_{n-1}^{(l)}}
} - \frac{\sqrt{e_{n-1}^{(l+1)}} }{\sqrt{q_{n-1}^{(l+1)}}}\right). }
\]

Since $\sqrt{1+\xi^{(l)}}\frac{1+\|x^{(l)}\|}{1+\|x^{(l)}\|^2}
\rightarrow 1$, $e_{n-1}^{(l+1)} \rightarrow 0$,
$q_{n}^{(l)}\rightarrow 0$, and $q_{n-1}^{(l+2)}$ converges to
$\sigma_{n-1}^2 - \sigma_n^2 > 0$, it follows that $\eta^{(l)}$
converges to $0$ as well.
\end{proof}

\bibliographystyle{plain}
\bibliography{thesis-refer}

\end{document}